\documentclass[table]{amsart}
\usepackage[margin=1.0in]{geometry}
\usepackage{amscd,amsmath,amsxtra,amsthm,amssymb,stmaryrd,xr,mathrsfs,mathtools,enumerate,commath, comment, enumitem, graphicx}

\usepackage{stmaryrd}
\usepackage{orcidlink}
\usepackage{multirow}
\usepackage{xcolor}
\usepackage{commath}
\usepackage{comment}
\usepackage{tikz-cd}
\usepackage{tkz-graph}

\usepackage{longtable} 
\usepackage{pdflscape} 
\usepackage{booktabs}
\usepackage{hyperref}
\definecolor{vegasgold}{rgb}{0.77, 0.7, 0.35}
\definecolor{darkgoldenrod}{rgb}{0.72, 0.53, 0.04}
\definecolor{gold(metallic)}{rgb}{0.83, 0.69, 0.22}
\hypersetup{
 colorlinks=true,
 linkcolor=darkgoldenrod,
 filecolor=brown,      
 urlcolor=gold(metallic),
 citecolor=darkgoldenrod,
 }
\newtheorem{lthm}{Theorem}

\usepackage[all,cmtip]{xy}

\newcommand\tinycube{\begin{tikzpicture}[scale=1.5]
    \coordinate (A1) at (0, 0);
    \coordinate (A2) at (0, 0.1);
    \coordinate (A3) at (0.1, 0.1);
    \coordinate (A4) at (0.1, 0);
    \coordinate (B1) at (0.03, 0.03);
    \coordinate (B2) at (0.03, 0.13);
    \coordinate (B3) at (0.13, 0.13);
    \coordinate (B4) at (0.13, 0.03);

    \draw (A1) -- (A2);
    \draw (A2) -- (A3);
    \draw (A3) -- (A4);
    \draw (A4) -- (A1);
    \draw[densely dotted] (A1) -- (B1);
    \draw[densely dotted] (B1) -- (B2);
    \draw (A2) -- (B2);
    \draw (B2) -- (B3);
    \draw (A3) -- (B3);
    \draw (A4) -- (B4);
    \draw (B4) -- (B3);
    \draw[densely dotted] (B1) -- (B4);
\end{tikzpicture}}

\DeclareFontFamily{U}{wncy}{}
\DeclareFontShape{U}{wncy}{m}{n}{<->wncyr10}{}
\DeclareSymbolFont{mcy}{U}{wncy}{m}{n}
\DeclareMathSymbol{\Sh}{\mathord}{mcy}{"58}
\usepackage[T2A,T1]{fontenc}
\usepackage[OT2,T1]{fontenc}

\usepackage{tikz}
\usetikzlibrary{shapes.geometric}
\usetikzlibrary{decorations.markings}
\tikzset{every loop/.style={min distance=10mm,looseness=10}}
\tikzstyle{vertex}=[auto=left,circle,minimum size=1pt,inner sep=0pt]

\newtheorem{theorem}{Theorem}[section]
\newtheorem{lemma}[theorem]{Lemma}

\newtheorem*{theorem*}{Theorem}
\newtheorem*{ass*}{Assumption}

\newtheorem{remark}[theorem]{Remark}

\newtheorem{proposition}[theorem]{Proposition}

\newcommand{\fq}{\mathfrak{q}}

\newcommand{\Q}{\mathbb{Q}}
\newcommand{\F}{\mathbb{F}}

\newcommand{\cL}{\mathcal{L}}

\newcommand{\cM}{\mathcal{M}}

\newcommand{\op}[1]{\operatorname{#1}}

\newcommand\numberthis{\addtocounter{equation}{1}\tag{\theequation}}

\numberwithin{equation}{section}

\begin{document}

\title[Second moments for cubic L-functions]{The second moment of cubic Dirichlet L-functions over function fields}

\author[S.~Goel]{Shivani Goel\, \orcidlink{0009-0000-7841-2011}}
\address[Goel]{Chennai Mathematical Institute, H1, SIPCOT IT Park, Kelambakkam, Siruseri, Tamil Nadu 603103, India}
\email{shivanig@cmi.ac.in}

\author[A.~Ray]{Anwesh Ray\, \orcidlink{0000-0001-6946-1559}}
\address[Ray]{Chennai Mathematical Institute, H1, SIPCOT IT Park, Kelambakkam, Siruseri, Tamil Nadu 603103, India}
\email{anwesh@cmi.ac.in}

\keywords{cubic L-functions over function fields, moments, Gauss sums}
\subjclass[2020]{11R59, 11M38, 11R58}

\begin{abstract}
In this article, we study the second moment of cubic Dirichlet L-functions at the central point $s=1/2$ over the rational function field $\mathbb{F}_q(T)$, where $q$ is a power of an odd prime satisfying $q \equiv 2 \pmod{3}$. Our result extends prior work of David, Florea and Lalin \cite{DFLCubic}, who obtained an asymptotic formula for the first moment. Our approach relies on analytic techniques (Perron's formula, approximate functional equation, etc), adapted to the function field context.  A key step in the construction is to relate second moment to certain averages of Gauss sums, which are estimated in \emph{loc. cit.} using results of Kubota \cite{kubota} and Hoffstein \cite{Hoffsteinmetaplectic}. 
\end{abstract}

\maketitle

\section{Introduction}
\subsection{Background and motivation}
\par The study of L-functions in the function field setting has long served as a fertile testing ground for deep conjectures in number theory and arithmetic geometry. The arithmetic of the polynomial ring \( \mathbb{F}_q[T] \), where \( q \) is a power of a fixed odd prime \( p \), closely parallels the classical theory of the integers, with zeta and \( L \)-functions playing an analogous role. In this setting, one gains access to powerful tools from algebraic geometry, most notably through the interpretation of \( L \)-functions as generating series encoding the action of the Frobenius endomorphism on the \'etale cohomology of algebraic curves over finite fields. For a systematic introduction to the topic, the reader may consult \cite{Rosenfunctionfields, weilconj, ThakurFFA}.

The study of moments of Dirichlet \( L \)-functions at the central point \( \tfrac{1}{2} \) is a fundamental problem in analytic number theory. For quadratic characters \( \chi \) over the rationals, the first and second moments of \( L(\tfrac{1}{2}, \chi) \) were computed by Jutila \cite{Jutila}, who established the asymptotic formula
\[
\sum_{0<d \leqslant D} L\left(\tfrac{1}{2}, \chi_{d}\right)= \frac{P(1)}{4 \zeta(2)} D\left\{\log \left(\frac{D}{\pi}\right)+\frac{\Gamma^{\prime}}{\Gamma}\left(\tfrac{1}{4}\right)+4 \gamma - 1 + 4 \frac{P^{\prime}}{P}(1)\right\} +O\left(D^{3/4+\varepsilon}\right),
\]
where $d$ ranges over squarefree numbers between $1$ and $D$, and $P(s):=\prod_{p}\left(1-\frac{1}{(p+1)p^{s}}\right)$.
Moreover, for the second moment, he showed that
\[
\sum_{0<d \leqslant D} L\left(\tfrac{1}{2}, \chi_{d}\right)^{2}=\frac{c_1}{\zeta(2)} D \log^{3} D + O\left(D(\log D)^{5/2+\varepsilon}\right),
\]
for an explicit constant $c_1>0$.
Related results for the third moment were obtained by Soundararajan \cite{Sound}. There are conjectural asymptotics for moments of quadratic Dirichlet $L$-functions over $\Q$ due to Keating and Snaith \cite{keatingsnaith} based on random matrix theory. Baier and Young \cite{baieryoung} obtained the smoothed first moment for cubic Dirichlet characters over $\Q$. Certain results have also been obtained over $\Q$ for thin families of characters by Luo \cite{Luo}, Friedberg--Hoffstein--Lieman \cite{FHL}. More recently, David, de Faveri, Dunn, and Stucky \cite{david2024non} have computed the first and second moments of $L(1/2, \chi_q)$ for primitive cubic characters $\chi_q$ with $q \in \mathbb{Z}[\mu_3]$ satisfying $q \equiv 1 \pmod{9}$, thereby establishing an unconditional positive proportion of non-vanishing at the central point.
\par Over the rational function field, the first results of this nature are due to Hoffstein and Rosen \cite{HoffsteinRosen}, who obtained an asymptotic formula for the first moment of $L(\frac{1}{2}, \chi)$ where $\chi$ ranges over quadratic Dirichlet characters of the rational function field $\F_q(T)$. Such $L$-functions are closely related to the zeta functions of hyperelliptic curves over $\F_q$. Their methods involve the use of Eisenstein series for the metaplectic $2$-fold cover of $\op{GL}_2$. A different proof of this result, using classical methods was obtained by Andrade and Keating \cite{andradekeating}. This classical approach was further extended by Florea, who obtained asymptotic formulae for the second, third and fourth moments \cite{Florea2ndand3rd, Florea4th}. 
\subsection{Main results}
\par For cubic characters over the rational function field $\F_q(T)$, an asymptotic formula for the first moment was obtained by David–Florea–Lalin \cite{DFLCubic}. Let $L_q(s, \chi)$ denote the $L$-function associated to a character $\chi$ of $\F_q(T)$. A natural dichotomy arises depending on whether $q \equiv 1 \pmod{3}$ or $q \equiv 2 \pmod{3}$, since in the former case, the third roots of unity are contained in $\F_q^\times$. Let $\mu_3$ (resp. $\mu_{3,q}$) denote the group of third roots of unity in $\mathbb{C}^\times$ (resp. in $\F_q^\times$), and fix a bijection $\Omega: \mu_3 \xrightarrow{\sim} \mu_{3,q}$. Define a character $\chi_3$ on $\F_q^\times$ by setting $\chi_3(a) := \Omega^{-1}(a^{\frac{q-1}{3}})$. One refers to the case $q \equiv 1 \pmod{3}$ as the \emph{Kummer case}, and to $q \equiv 2 \pmod{3}$ as the \emph{non-Kummer case}.

\par When $q \equiv 2 \pmod{3}$, Theorem 1.1 of \emph{loc. cit.} asserts that:
\[
\sum_{\substack{\chi \text{ primitive cubic} \\ \mathrm{genus}(\chi) = g}} L_q(1/2, \chi)
= \frac{\zeta_q(3/2)}{\zeta_q(3)} C(q) q^{g+2} + O(q^{\frac{7g}{8} + \varepsilon g}),
\]
where $C(q)$ is an explicit constant depending only on $q$ and $\varepsilon>0$. On the other hand when $q \equiv 1 \pmod{3}$, their result applies to primitive cubic characters $\chi$ satisfying the condition $\chi|_{\F_q^\times} = \chi_3$. Theorem 1.2 of \emph{loc. cit.} states that there exist explicit constants $C_1(q), C_2(q) > 0$ such that for $\varepsilon>0$,
\[
\sum_{\substack{\chi \text{ primitive cubic} \\ \mathrm{genus}(\chi) = g \\ \chi|_{\F_q^\times} = \chi_3}} L_q(1/2, \chi)
= C_1(q) g q^{g} + C_2(q) q^{g} + O\left(q^{g\frac{1 + \sqrt{7}}{4} + \varepsilon g}\right).
\]
The hypothesis that $\chi$ restricts to $\chi_3$ on $\F_q^\times$ serves to simplify the analysis by ensuring that all $L$-functions in the family satisfy a uniform functional equation which is easier to work with. Restrictions of this nature are common in the field, and help simplify many calculations. In \cite{andradekeating, Florea2ndand3rd, Florea4th}, the assumption that $q\equiv 1\pmod{4}$ is made. Furthermore, in the latter two works, $q$ is assumed to be a prime.

\par In this article we consider the non-Kummer case, i.e., we assume that $q$ is an odd prime power and $q\equiv 2\pmod{3}$. We obtain an asymptotic for the second moment of $L_q(\frac{1}{2}, \chi)$. 

\begin{lthm}\label{main theorem}
   With respect to notation above, assume that $q \equiv 2 \pmod{3}$, then, 
\begin{equation}\label{eqn for main theorem}
\sum_{\substack{\chi \text{ primitive cubic} \\ \mathrm{genus}(\chi) = g}} L_q(1/2, \chi)^2
= \frac{g(g+2)A_q(\frac{1}{q^2}, \frac{1}{q^{3/2}}) \zeta_{q}(3/2)^2}{8\zeta_{q}(3)}q^{g+2} + O(q^{g}),
\end{equation}
where $A_q$ is the function defined in \eqref{A_Qdefn}.
\end{lthm}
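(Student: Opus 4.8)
The plan is to follow the analytic template of David–Florea–Lalin, promoted from the first to the second moment. First I would fix a convenient parametrization of the family: in the non-Kummer case a primitive cubic character $\chi$ of conductor $f \in \mathbb{F}_q[T]$ corresponds (via the cubic residue symbol over $\mathbb{F}_{q^2}[T]$) to a squarefree monic polynomial, and the genus condition $\mathrm{genus}(\chi)=g$ translates into a fixed degree constraint on $f$. The $L$-function $L_q(s,\chi)$ is a polynomial in $q^{-s}$ of known degree, so by the approximate functional equation $L_q(1/2,\chi)^2$ can be written as a short divisor-type sum $\sum_{n} d_\chi(n)\, \chi(n) q^{-\|n\|/2}$ up to length roughly $q^{2g}$, where $d_\chi(\cdot)$ is the (twisted) divisor function; squaring introduces the divisor function exactly as in Florea's quadratic second-moment work \cite{Florea2ndand3rd}. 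I would then swap the order of summation and reduce the problem to understanding, for each monic $n$, the character sum $\sum_{f \text{ sqfree}, \deg f = \text{fixed}} \chi_f(n)$ over the family.

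The main term is expected to come from the locus where $n$ is a perfect cube (so $\chi_f(n)=1$ generically), contributing a polynomial-density count of the family times a divisor-sum that, after an Euler-product computation, produces the arithmetic factor $A_q(q^{-2}, q^{-3/2})$ together with $\zeta_q(3/2)^2/\zeta_q(3)$; the double pole structure of this Euler product in two variables is what yields the $g(g+2)$ polynomial (degree $2$ in $g$) rather than a single power of $g$. Concretely I would insert the Möbius-type sieve for squarefreeness, separate the "diagonal" cube term, and compute the resulting generating function in two complex variables $u = q^{-s}$, extracting the leading behaviour by a contour shift / Cauchy-integral argument (Perron's formula in the function-field form, i.e. a contour integral in $u$ around a small circle). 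The secondary (non-cube) terms must be shown to be $O(q^g)$, i.e. genuinely of lower order than the $q^{g+2}$ main term — and this is where the bulk of the work lies.

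The hard part will be bounding the off-diagonal contributions: when $n$ is not a cube, $\sum_f \chi_f(n)$ is an average of cubic residue symbols, hence an average of cubic Gauss sums, and controlling these requires the metaplectic input — Kubota's and Hoffstein's results on the analytic continuation and growth of Dirichlet series built from cubic Gauss sums \cite{kubota, Hoffsteinmetaplectic}, exactly as invoked in \cite{DFLCubic}. For the second moment the Gauss-sum averages that appear are "thicker" (the divisor function $d_\chi(n)$ spreads the support, and one has sums of the shape $\sum_{n_1 n_2 = \square\text{-twisted}} \chi_f(n_1 n_2)$), so I would need a double-Dirichlet-series estimate: view the generating function in the two variables attached to the two $L$-factors and the metaplectic variable coming from the Gauss sum, establish a region of holomorphy past the trivial one, and convert this into power-saving bounds via contour shifts. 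The error analysis must be uniform enough that the accumulated error over the $\sim q^{2g}$-long approximate functional equation and the Möbius sieve still comes out $O(q^g)$; tracking these uniformities, and in particular handling the "dual" (principal-part) terms of the functional equation and the contribution of non-squarefree or non-primitive $f$, is the principal technical obstacle. A secondary subtlety is that the bijection $\Omega$ and the restriction $\chi|_{\mathbb{F}_q^\times}$ are not pinned down here (unlike the Kummer case of \cite{DFLCubic}), so the functional equation's root number varies over the family and one must either average over it or arrange the approximate functional equation to be insensitive to it.
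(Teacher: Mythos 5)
Your overall plan coincides with the paper's: parametrize the family by squarefree $F\in\F_{q^2}[T]$ of degree $g/2+1$ with no prime factor in $\F_q[T]$, apply the approximate functional equation with $k=2$, swap summation, split the principal term into cube and non-cube parts, extract the main term from a two-variable generating function via Perron's formula and contour shifts, bound the non-cube part by Lindel\"of-type bounds, and control the dual term through averages of cubic Gauss sums resting on the Kubota--Hoffstein metaplectic input (Proposition \ref{Gauss average Propn}). So the architecture is right.

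There is, however, one concrete misidentification that would derail the execution: you attribute the factor $g(g+2)$ to ``the double pole structure of this Euler product in two variables.'' That is not where it comes from. In the generating function $\mathcal{B}_2(u,z)=\mathcal{Z}_q^4(u)\frac{\mathcal{Z}_{q^2}(z)}{\mathcal{Z}_{q^2}(z^2)}A_q(z,u)$ the relevant pole at $z=1/q^2$ is simple, and the residue computation in \eqref{prin-cube} shows that the leading term of each $S_{t,\mathrm{prin},\tinycube}$ is $\frac{q^{g+2}}{\zeta_q(3)}\zeta_q(3/2)^2A_q(1/q^2,1/q^{3/2})$, \emph{independent of $t$}. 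The quadratic polynomial in $g$ arises purely combinatorially from the outer weights in the approximate functional equation: because every cubic character is even when $q\equiv 2\pmod 3$, the functional equation carries the extra factor $\frac{1-u}{1-\frac{1}{qu}}$, whose expansion produces the weights $\binom{k+i-1}{i}=i+1$, and $\sum_{i=0}^{A}(i+1)=\frac{A(A+1)}{2}=\frac{g(g+2)}{8}$ after the choice $A=g/2$. If you instead look for a higher-order pole to generate the $g^2$, you will not find one in the principal term and will miss the need to track the $(i+1)$-weighted family of principal and dual sums (one for each $i$), which is precisely the new bookkeeping relative to \cite{Florea2ndand3rd}. A smaller point: your worry that the root number varies over the family and must be averaged is moot in the non-Kummer case --- all characters are even, the functional equation is uniform, and $\omega(\chi_F)^2=q^{-g-2}G_{q^2}(1,F)^2$ is computed explicitly and absorbed into the Gauss-sum average $G_{q^2}(f,F)$ in the dual term; no extra averaging over signs is needed.
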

\noindent The case $q \equiv 1 \pmod{3}$ is not addressed in this paper, as it necessitates the analysis of certain twisted averages of Gauss sums, which have not been studied in the literature. The difficulties arising in this case are substantial and will be addressed in a more general framework in forthcoming work.

\subsection{Methodology}
Let us briefly outline the strategy of the proof of Theorem \ref{main theorem}. For any positive integer \( n \), let \( \mathcal{M}_{q,n} \subset \F_q[T] \) be the set of all monic polynomials of degree \( n \). Assume henceforth that $q\equiv 2\pmod{3}$. Let \( \mathcal{M}_q \subset \F_q[T] \) denote the set of all monic polynomials, and let \( \mathcal{M}_{q,\leq n} \subset \mathcal{M}_q \) denote the subset of monic polynomials of degree \( \leq n \). Define \( \mathcal{H}_{q,n} \) as the subset of \( \mathcal{M}_{q,n} \) consisting of squarefree monic polynomials. It is easy to see that
    \[
    \# \mathcal{M}_{q,n} = q^n, \quad \# \mathcal{M}_{q,\leq n} = 1 + q + \dots + q^n = \frac{q^{n+1} - 1}{q - 1}, \quad \# \mathcal{H}_{q,n} = q^n \left(1 - \frac{1}{q} \right).
    \] Let $\chi$ be a cubic character and $L_q(s, \chi)$ be the associated $L$-function over $\F_q[T]$. Much like number field case, \( L_q(s, \chi) \) satisfies a functional equation (see Lemma \ref{functional equation lemma}), which yields the \emph{approximate functional equation}:
\begin{equation}\label{approx fe intro}
\begin{split}
L_q(1/2, \chi)^2 = \left(1 - \frac{1}{\sqrt{q}}\right)^2 \Bigg( & \sum_{i=0}^A (i+1) \sum_{f \in \cM_{\leq A - i}} \frac{\chi(f)\, d(f)}{\sqrt{|f|}} \\
& + \sum_{i=0}^{2g - A - 1} (i+1)\, \omega(\chi)^2 \sum_{f \in \cM_{\leq 2g - A - 1 - i}} \frac{\bar{\chi}(f)\, d(f)}{\sqrt{|f|}} \Bigg).
\end{split}
\end{equation}
Here, $d(f)$ is the divisor function, $\omega(\chi)$ is the sign of $\chi$ and the choice of $A$ is postponed till the end of the argument. We refer to Proposition \ref{approx funct eqn} for further details. Associated to a polynomial $F \in \mathcal{H}_{q^2, g/2 + 1}$, we have a primitive cubic Dirichlet character $\chi_F$ defined in section \ref{section 2.5}.
The second moment of central L-values over primitive cubic characters of genus \( g \) is then given by
\[
\sum_{\substack{\chi \text{ primitive cubic} \\ \mathrm{genus}(\chi) = g}} L_q(1/2, \chi)^2 = \sum_{\substack{F \in \mathcal{H}_{q^2,\, g/2 + 1} \\ P \mid F \Rightarrow P \notin \mathbb{F}_q[T]}} L_q(1/2, \chi_F)^2,
\]
where $P$ ranges over irreducible monic polynomials dividing $F$. Substituting the approximate functional equation \eqref{approx fe intro} into the right-hand side and interchanging sums, one has the following
\begin{equation}\label{setup eqn intro}
\sum_{\substack{F \in \mathcal{H}_{q^2,\, g/2 + 1} \\ P \mid F \Rightarrow P \notin \mathbb{F}_q[T]}} L_q(1/2, \chi_F)^2 
= \left(1 - \frac{1}{\sqrt{q}}\right)^2 \left( \sum_{i=0}^A (i+1) S_{A - i, \mathrm{prin}} + \sum_{i=0}^{2g - A - 1} (i+1) S_{2g - A - 1 - i, \mathrm{dual}} \right),
\end{equation}
where
\[
S_{t, \mathrm{prin}} := \sum_{f \in \cM_{\leq t}} \frac{d(f)}{\sqrt{|f|}} \left( \sum_{\substack{F \in \mathcal{H}_{q^2,\, g/2 + 1} \\ P \mid F \Rightarrow P \notin \mathbb{F}_q[T]}} \chi_F(f) \right)
\]
is the \emph{principal term} and
\[
S_{t, \mathrm{dual}} := \sum_{f \in \cM_{\leq 2g - t - 1}} \frac{d(f)}{\sqrt{|f|}} \left( \sum_{\substack{F \in \mathcal{H}_{q^2,\, g/2 + 1} \\ P \mid F \Rightarrow P \notin \mathbb{F}_q[T]}} \omega(\chi_F)^2\, \bar{\chi}_F(f) \right)
\]
is called the \emph{dual term}. Decompose the principal term as
\[
S_{t, \rm{prin}} = S_{t, \rm{prin}, \tinycube} + S_{t, \rm{prin}, \neq \tinycube},
\]
where the subscripts indicate the contribution from perfect cubes and non-cubes, respectively. The main term in our asymptotic arises from \( S_{t, \mathrm{prin}, \tinycube} \); see Section~\ref{main tem section}. The contributions from \( S_{t, \mathrm{prin}, \neq \tinycube} \) and the dual term \( S_{t, \mathrm{dual}} \) are shown to be of lower order and are absorbed into the error term which arises from \( S_{t, \mathrm{prin}, \tinycube} \). The analysis of \( S_{t, \mathrm{dual}} \) relies crucially on bounds for averages of Gauss sums (see Section~\ref{dual term section}). 

\subsection{Organization}
Including the introduction, the article consists of three sections. Section \ref{s 2} is devoted to establishing notation and developing background notions. We discuss zeta and L-functions associated to cubic Dirichlet characters. Proposition \ref{approx funct eqn} establishes the approximate functional equation \eqref{approx fe intro}. We recall key results on Gauss sums and their averages, see Proposition \ref{Gauss average Propn}. Theorem \ref{main theorem} is proven in section \ref{s 3}. In section \ref{s 3.1}, it is shown that the second moment decomposes as in \eqref{setup eqn intro}. The terms \( S_{t, \mathrm{prin}, \tinycube} \), \( S_{t, \mathrm{prin}, \neq \tinycube} \) and \( S_{t, \mathrm{dual}, \tinycube} \) are studied in sections \ref{main tem section}, \ref{noncube section} and \ref{dual term section} respectively.

\subsection{Acknowledgement} We would like to thank Alexandra Florea for encouraging comments and helpful feedback. 

\section{Preliminary notions}\label{s 2}
\par In this section, we discuss preliminary notions and set up notation used in the rest of the article.
\subsection{Basic notation}
\begin{itemize}
    \item Fix an odd prime number \( p \), and let \( q \) be a power of \( p \). Denote by \( \mathbb{F}_q \) the finite field with \( q \) elements.
    \item For any nonzero polynomial \( f \in \mathbb{F}_q[T] \), define its norm by \( |f| := q^{\deg(f)} \). Interpreting \( \F_q(T) \) as the field of functions of \( \mathbb{P}^1_{/\F_q} \), note that \( |\cdot| \) is the absolute value at \( \infty = (T^{-1}) \).
    \item A polynomial \( P \in \mathbb{F}_q[T] \) is said to be \emph{prime} if it is monic, irreducible, and nonconstant. The nonzero prime ideals of \( \mathbb{F}_q[T] \) are the principal ideals generated by prime polynomials. 
    \item There is an analog of the Möbius function over \( \mathbb{F}_q[T] \). If \( f \) is a nonzero squarefree polynomial with distinct prime factors \( P_1, \dots, P_r \), then \( \mu(f) := (-1)^r \). If \( f \) is not squarefree, then \( \mu(f) := 0 \). The function \( \mu \) plays an important role in arguments involving inclusion and exclusion.
    \item Given a nonzero polynomial \( f \in \F_q[T] \), $ \varphi(f)$ is the number of elements in $\left( \F_q[T]/(f) \right)^\times$.
    \item For \( k \geq 2 \), the \( k \)-th divisor function \( d_k(f) \) is given by $
    d_k(f) := \sum_{f_1 \cdots f_k = f} 1$,
    and set \( d(f) := d_2(f) \).
\end{itemize}

\subsection{The zeta function}
The zeta function of the affine line \( \mathbb{A}^1_{\mathbb{F}_q} \) is denoted $\zeta_q(s)$ and is defined for $\op{Re}(s)>1$ by the Dirichlet series $\zeta_q(s) := \sum_{f \in \mathcal{M}_q} \frac{1}{|f|^s}$. One finds that
\[
\zeta_q(s) = \sum_{n \geq 0} \left( \sum_{f \in \mathcal{M}_n} 1 \right) q^{-ns} = \sum_{n \geq 0} \frac{q^n}{q^{ns}} = \frac{1}{1 - q^{1-s}}.
\]
\noindent This function admits an Euler product decomposition of the form

\[
\zeta_q(s) = \prod_P \left( 1 - |P|^{-s} \right)^{-1},
\]
\noindent where the product runs over all prime polynomials \( P \). Introducing the variable \( u = q^{-s} \), we note that \( |u| = q^{-\operatorname{Re}(s)} < 1 \) in the domain of convergence and define 

\[
\mathcal{Z}(u) := \zeta_q(s) = \sum_{f \in \mathcal{M}} u^{\op{deg}(f)} = \frac{1}{1 - qu}.
\]
We recall a version of Perron's formula over \( \mathbb{F}_q[T] \), which will be used multiple times in this article. Associate to a function \( a: \mathcal{M} \to \mathbb{C} \), the generating series:
\[
\mathcal{A}(u) := \sum_{f \in \mathcal{M}} a(f) u^{\op{deg}(f)}.
\] 
\begin{theorem}\label{Perron's thm}
    With respect to notation above, assume that $\mathcal{A}(u)$ is absolutely convergent on the disk $|u|\leq r$, where $r\in (0, 1)$. Then for $n\geq 1$, the following assertions hold:
    \begin{enumerate}
        \item $\sum_{f \in \mathcal{M}_n} a(f)=\frac{1}{2 \pi i} \oint_{|u|=r} \frac{\mathcal{A}(u)d u}{u^{n+1}}$
        \item $\sum_{f \in \mathcal{M}_{\leq n}} a(f)=\frac{1}{2 \pi i} \oint_{|u|=r} \frac{\mathcal{A}(u)d u}{(1-u) u^{n+1}}$,
    \end{enumerate}
    where $\oint$ is the counterclockwise integral over the circle $|u|=r$.
\end{theorem}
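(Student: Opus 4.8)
The plan is to reduce both identities to the elementary orthogonality relation for monomials on a circle, namely that for any integer $k$,
\[
\frac{1}{2\pi i}\oint_{|u|=r} u^{k}\,du = \begin{cases} 1 & k=-1,\\ 0 & k\neq -1,\end{cases}
\]
which is immediate on parametrizing $u=re^{i\theta}$. First I would regroup the defining series by degree, writing $\mathcal{A}(u)=\sum_{m\geq 0} c_m u^{m}$ with $c_m:=\sum_{f\in\mathcal{M}_m} a(f)$. This rearrangement is legitimate because the hypothesis that $\mathcal{A}$ converges absolutely on $\{|u|\leq r\}$ gives $\sum_{f}|a(f)|\,r^{\deg f}<\infty$, whence $\sum_m |c_m|\, r^{m}\leq \sum_m\big(\sum_{f\in\mathcal{M}_m}|a(f)|\big)r^m<\infty$.

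For part (1), I would divide by $u^{n+1}$ to obtain $\mathcal{A}(u)/u^{n+1}=\sum_{m\geq 0} c_m u^{m-n-1}$ and integrate termwise over $|u|=r$. Termwise integration is the one point requiring justification: on the circle the series $\sum_m c_m u^{m-n-1}$ is dominated by $\sum_m |c_m|\, r^{m-n-1}=r^{-n-1}\sum_m|c_m|\,r^m<\infty$, so it converges uniformly there, and the Weierstrass $M$-test (equivalently dominated convergence) permits exchanging $\sum$ and $\oint$. By the orthogonality relation only the term with $m-n-1=-1$, i.e.\ $m=n$, survives, yielding $c_n=\sum_{f\in\mathcal{M}_n}a(f)$, which is exactly (1).

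For part (2), I would observe that since $r<1$ the geometric series $\tfrac{1}{1-u}=\sum_{k\geq 0}u^{k}$ converges absolutely on $\{|u|\leq r\}$, so the product $\mathcal{A}(u)/(1-u)$ is again an absolutely convergent power series on this disk, with coefficients given by the Cauchy product $C_N:=\sum_{m=0}^{N}c_m=\sum_{f\in\mathcal{M}_{\leq N}}a(f)$. Applying the coefficient-extraction argument of part (1) to the generating series $\mathcal{A}(u)/(1-u)$ in place of $\mathcal{A}(u)$ then picks out $C_n=\sum_{f\in\mathcal{M}_{\leq n}}a(f)$, establishing (2).

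The only genuine obstacle is the interchange of summation and integration; everything else is formal. This is handled uniformly for both parts by the absolute convergence hypothesis on the closed disk $\{|u|\le r\}$, which guarantees that the relevant series converge uniformly on the compact circle $|u|=r$ and may therefore be integrated term by term.
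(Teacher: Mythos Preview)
Your proof is correct and follows essentially the same approach as the paper: the paper simply states that ``the proof follows from a direct application of Cauchy's integral formula,'' and your argument is precisely a careful unpacking of that sentence, with the termwise integration justified via uniform convergence on the circle $|u|=r$.
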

\begin{proof}
    The proof follows from a direct application of Cauchy's integral formula.
\end{proof}
\subsection{Characters and L-polynomials}
\par A Dirichlet character modulo a polynomial \( f \) is a function \( \chi: \mathbb{F}_q[T] \to \mathbb{C} \) which is completely multiplicative for which \( \chi(g + fh) = \chi(g) \) for all polynomials \( g, h \in \mathbb{F}_q[T] \), with the additional requirement that \( \chi(h) \neq 0\) if and only if \( (h, f) = 1 \). Alternatively, we may view $\chi$ as a homomorphism $\left(\F_q[T]/(f)\right)^\times \rightarrow \mathbb{C}^\times$. Given two characters $\chi$ and $\psi$ modulo $(f)$, we set 
\[\delta(\chi, \psi):=\begin{cases}
    1 & \text{ if } \chi=\psi\\
    0 & \text{ otherwise}.
\end{cases}\]
One has the following orthogonality relation:
\begin{equation}\label{orthogonalityreln}
\sum_{g \bmod f} \chi(g) \overline{\psi(g)} = \varphi(f) \delta(\chi, \psi).
\end{equation}
For $g,f\in \left(\F_q[T]/(f)\right)^\times$, set 
\[\delta(f, g):=\begin{cases}
    1 & \text{ if } f=g;\\
    0 & \text{ otherwise.}
    \end{cases}\]
Summing over all Dirichlet characters modulo $(f)$, we have the relation:
\begin{equation}\label{orthogonality reln 2}
\sum_{\chi \bmod f} \chi(g) \overline{\chi(h)} = \varphi(f) \delta(g, h).
\end{equation}

\par Note that if $q\equiv 1\pmod{3}$, then, $\F_q^\times$ contains the cube roots of unity. We shall implicitly choose an isomorphism between the cube roots of unity in $\mathbb{C}^\times$ and those in $\F_q^\times$. Upon making this identification, we let $\chi_3$ be the cubic character on $\F_q^\times$ defined by $\chi_3(\alpha):=\alpha^{\frac{q-1}{3}}$. Let $\mathbf{1}$ be the trivial character. The characters of $\F_q^\times$ are $\mathbf{1}$, $\chi_3$ and $\chi_3^2$. Note that they are distinct if and only if $q\equiv 4,7\pmod{9}$. We say that a character $\chi$ on $\F_q[T]$ is even (resp. odd) if it is trivial (resp. nontrivial) on $\F_q^\times$. If $q\equiv 2\pmod{3}$, then any cubic character on $\F_q[T]$ is even. Define the \emph{$\mathcal{L}$-polynomial} associated to $\chi$ as follows:
\[\cL(u, \chi):=\sum_{f\in \cM} \chi(f) u^{\op{deg}(f)}=\sum_{n\geq 0} \left(\sum_{f\in \cM_n}\chi(f)\right)u^n.\]
Let $h$ be the conductor of $\chi$ and assume that $\chi$ is not trivial. It follows from the orthogonality relation \eqref{orthogonalityreln} that if $n\geq \op{deg}(h)$, then
\[\sum_{f\in \cM_n} \chi(f)=0\] and thus $\cL(u, \chi)$ is a polynomial of degree $\leq \op{deg}(h)-1$. Setting $u:=q^{-s}$ we write $L_q(s, \chi)=\cL(u, \chi)$. Let \( C \) be a curve over \( \mathbb{F}_q(T) \) whose function field defines a cyclic cubic extension of \( \mathbb{F}_q(T) \). By the Weil conjectures, the zeta function of \( C \) takes the form  
\[
\mathcal{Z}_{C}(u) = \frac{\mathcal{P}_{C}(u)}{(1 - u)(1 - q u)},
\]
\noindent where the numerator polynomial \( \mathcal{P}_{C}(u) \) is given by  
\[
\mathcal{P}_{C}(u) = \prod_{j=1}^{g} \left( 1 - \sqrt{q} u e^{2 \pi i \theta_j} \right) \prod_{j=1}^{g} \left( 1 - \sqrt{q} u e^{-2 \pi i \theta_j} \right),
\]
\noindent for some eigenangles \( \theta_j \), \( j = 1, \dots, g \). Since the function field of \( C \) corresponds to a cyclic cubic extension of \( \mathbb{F}_q(T) \), its arithmetic is naturally described in terms of the two associated cubic Dirichlet characters \( \chi \) and \( \bar{\chi} \). In particular, \( \mathcal{P}_C(u) \) can be expressed in terms of the \( L \)-functions of these characters. Setting

\[
\mathcal{L}_{C}(u, \chi) = 
\begin{cases} 
\mathcal{L}_{q}(u, \chi) & \text{if } \chi \text{ is odd}, \\  
\frac{\mathcal{L}_{q}(u, \chi)}{1 - u} & \text{if } \chi \text{ is even},  
\end{cases}
\]
\noindent one has the factorization:
\[
\mathcal{P}_{C}(u) = \mathcal{L}_{C}(u, \chi) \mathcal{L}_{C}(u, \bar{\chi}).
\]

\subsection{The approximate functional equation}
 Let $\chi$ be a nontrivial, primitive Dirichlet character of conductor $h$. Define the \emph{sign} of $\chi$ by
\[
\omega(\chi):= - q^{-\frac{(\op{deg}(h) -\delta)}{2}} \sum_{f \in \mathcal{M}_{\op{deg}(h) - 1}} \chi(f),
\]  
where \[\delta:=\begin{cases}
    1 & \text{ for }\chi \text{ odd},\\
    2 & \text{ for }\chi\text{ even}.\\
\end{cases}\]
According to the Riemann-Hurwitz formula, we have that
\begin{equation}\label{RH formula}
\op{deg}(h)=g+2- \begin{cases}0 & \text { if } \chi \text { is even,}\\ 1 & \text { if } \chi \text { is odd.}\end{cases}
\end{equation}
\noindent We state the functional equation for $\mathcal{L}_{q}(u, \chi)$. A natural dichotomy arises, depending on whether $\chi$ is odd or even.
\begin{lemma} \label{functional equation lemma}
With respect to notation above, the following assertions hold.
\begin{enumerate}
    \item If $\chi$ is odd, we have that
    \begin{equation}\label{FE odd case}
\mathcal{L}_{q}(u, \chi) = \omega(\chi) (\sqrt{q} u)^{g} \mathcal{L}_{q} \left(\frac{1}{q u}, \bar{\chi} \right).
\end{equation}
\item On the other hand, suppose that $\chi$ is even, then 
\begin{equation}\label{FE even case}
\mathcal{L}_{q}(u, \chi) = \omega(\chi) (\sqrt{q} u)^{g} \frac{1 - u}{1 - \frac{1}{q u}} \mathcal{L}_{q} \left(\frac{1}{q u}, \bar{\chi} \right).
\end{equation}
\end{enumerate}
\end{lemma}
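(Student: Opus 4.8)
The plan is to establish both parts simultaneously by expanding the coefficients of $\mathcal{L}_q(u,\chi)$ through additive duality modulo the conductor $h$; this is the classical route to functional equations of Dirichlet $L$-functions, adapted to $\F_q[T]$. Write $d:=\op{deg}(h)$ and $A_n(\chi):=\sum_{f\in\cM_n}\chi(f)$, so that $\mathcal{L}_q(u,\chi)=\sum_{n\ge 0}A_n(\chi)u^n$ is a polynomial of degree $\le d-1$ (the coefficients with $n\ge d$ vanishing by \eqref{orthogonalityreln}); the Riemann--Hurwitz relation \eqref{RH formula} gives $d-\delta=g$ with $\delta$ as in the definition of $\omega(\chi)$. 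Fix the standard nontrivial additive character $e_q$ of $\F_q((1/T))/\F_q[T]$ and set $\tau(\chi):=\sum_{b\bmod h}\chi(b)e_q(b/h)$. Since $\chi$ is primitive of conductor $h$, one has for all $f\in\F_q[T]$ the twisted Gauss-sum identity $\chi(f)\tau(\overline{\chi})=\sum_{b\bmod h}\overline{\chi}(b)e_q(bf/h)$, together with $\tau(\chi)\tau(\overline{\chi})=\chi(-1)|h|$ (for $\chi$ cubic, $\chi(-1)=1$). Substituting into $A_n(\chi)$ and interchanging the two finite sums gives
\[
A_n(\chi)=\frac{1}{\tau(\overline{\chi})}\sum_{b\bmod h}\overline{\chi}(b)\,C_n(b),\qquad C_n(b):=\sum_{f\in\cM_n}e_q\!\left(\tfrac{bf}{h}\right).
\]

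First I would evaluate the exponential sum $C_n(b)$ explicitly. Writing a monic $f$ of degree $n$ as $f=T^n+g$ with $\op{deg}(g)<n$, one has $C_n(b)=e_q(bT^n/h)\sum_{\op{deg}(g)<n}e_q(bg/h)$; the second factor is a character sum over the $\F_q$-vector space of polynomials of degree $<n$, hence equals $q^n$ when $g\mapsto e_q(bg/h)$ is trivial on that space and $0$ otherwise, and (after expanding $b/h$ as a Laurent series at infinity) triviality translates into an explicit condition relating $\op{deg}(b)$ to $d-n$. Inserting this back and summing over $b$, the contribution organizes itself as an explicit power of $q$ times $A_{d-1-n}(\overline{\chi})$, plus --- when $\chi$ is \emph{even}, so that $\chi$ is trivial on $\F_q^\times$ --- an additional boundary term coming from constant multiples, which the vanishing of $\F_q^\times$-sums kills when $\chi$ is odd. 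Converting the powers of $q$ using $d-\delta=g$ and $\tau(\chi)\tau(\overline{\chi})=|h|=q^d$, one arrives at a coefficientwise relation of the shape $A_n(\chi)=c(\chi)\,q^{\,n-g/2}A_{g-n}(\overline{\chi})$ for $0\le n\le d-1$ in the odd case (with $c(\chi)$ a constant depending only on $\chi$), and an analogous relation carrying the extra constant term in the even case.

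It then remains to repackage these into generating-function identities. In the odd case, summing $A_n(\chi)u^n$ over $n$ collapses to $\mathcal{L}_q(u,\chi)=c(\chi)(\sqrt{q}u)^{g}\mathcal{L}_q(1/(qu),\overline{\chi})$, and reading off the $n=d-1$ coefficient identifies $c(\chi)$ with the quantity $\omega(\chi)$ of its definition, giving \eqref{FE odd case}. In the even case the extra boundary term, summed over $n$, is precisely a geometric series whose closed form is the rational factor $\frac{1-u}{1-1/(qu)}$; combined with the main term this yields \eqref{FE even case}, and one again checks the constant equals $\omega(\chi)$ (consistent with the divisibility of $\mathcal{L}_q(u,\chi)$ by $1-u$ for even primitive $\chi$, itself a consequence of \eqref{orthogonalityreln}).

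The main obstacle is the careful evaluation of $C_n(b)$ at the boundary degree $\op{deg}(b)=d-1-n$ and, in the even case, the clean isolation of the constant-polynomial contribution: these are exactly what fix the normalizing factor $(\sqrt{q}u)^{g}$, the correction $\frac{1-u}{1-1/(qu)}$, and the precise value of $\omega(\chi)$ (including its sign). As an alternative that trades this bookkeeping for heavier input, one may invoke the classical functional equation $\mathcal{P}_C(u)=(qu^2)^{g}\mathcal{P}_C(1/(qu))$ for the zeta function of the cubic cover $C$ (a consequence of Riemann--Roch) together with the factorization $\mathcal{P}_C(u)=\mathcal{L}_C(u,\chi)\mathcal{L}_C(u,\overline{\chi})$: because the Frobenius eigenangles of $\overline{\chi}$ are the negatives of those of $\chi$, the substitution $u\mapsto 1/(qu)$ interchanges the two factors, so the curve-level equation descends to $\mathcal{L}_C(u,\chi)=\omega(\chi)(\sqrt{q}u)^{g}\mathcal{L}_C(1/(qu),\overline{\chi})$ (the constant again pinned down by a short Gauss-sum computation), and multiplying through by $1-u$ in the even case recovers \eqref{FE even case}.
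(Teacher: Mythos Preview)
The paper does not prove this lemma at all: its entire proof reads ``This result is \cite[Lemma~2.3]{DFLCubic}.'' Your proposal, by contrast, outlines the standard Gauss-sum derivation---express $\chi(f)$ via the primitive-character identity $\chi(f)G(\overline\chi)=\sum_{b\bmod h}\overline\chi(b)e_q(bf/h)$, evaluate the resulting exponential sums $C_n(b)$ over $\cM_n$, and in the even case isolate the constant-polynomial boundary contribution that produces the rational factor $(1-u)/(1-\tfrac{1}{qu})$---and this is correct; it is essentially the argument one finds in Rosen's textbook or in the cited paper itself. Two small remarks. First, you overload the symbol $\tau(\chi)$: in the paper this denotes the Gauss sum of the restriction $\chi|_{\F_q^\times}$, whereas the full sum $\sum_{b\bmod h}\chi(b)e_q(b/h)$ is written $G(\chi)$. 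Second, your alternative route via the functional equation of $\mathcal{P}_C$ is not quite self-contained as stated: the product identity $\mathcal{P}_C(u)=(qu^2)^g\mathcal{P}_C(1/(qu))$ constrains only $\mathcal{L}_C(u,\chi)\mathcal{L}_C(u,\overline\chi)$, and the step ``$u\mapsto 1/(qu)$ interchanges the two factors'' amounts to $q/\alpha_j=\overline{\alpha_j}$, i.e.\ $|\alpha_j|=\sqrt{q}$, which is the Riemann hypothesis for $C$. That is of course available (Weil), so the argument works, but it is heavier input than the direct Gauss-sum computation rather than a shortcut around it.
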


\begin{proof}
    This result is \cite[Lemma 2.3]{DFLCubic}
\end{proof}

\noindent The functional equation gives rise to a symmetry in the polynomial expression for $\cL_q(u, \chi)^k$, which turn gives a nice expression for $L(1/2, \chi)^k$, commonly known as the \emph{approximate functional equation}.

\begin{proposition}[Approximate functional equation]\label{approx funct eqn}
    Let $\chi$ be a nontrivial primitive cubic character of modulus $h$. If $\chi$ is odd, then
    \[L(1/2, \chi)^k=\sum_{f\in \cM_{\leq A}}\frac{\chi(f)d_k(f)}{\sqrt{|f|}}+ \omega(\chi)^k\sum_{f\in \cM_{\leq kg-A-1}}  \frac{\bar{\chi}(f) d_{k}(f)}{\sqrt{|f|}}.\]
    On the other hand, if $\chi$ is even, 
    \[\begin{split}L(1/2, \chi)^k=&(1-q^{-1/2})^k \sum_{i=0}^{A}\binom{k+i-1}{i}\sum_{f\in \cM_{\leq A-i}}\frac{\chi(f)d_k(f)}{\sqrt{|f|}} \\ + & (1-q^{-1/2})^k \omega(\chi)^k \sum_{i=0}^{kg-A-1}\binom{k+i-1}{i}\sum_{f\in \cM_{\leq kg-A-1-i}}  \frac{\bar{\chi}(f) d_{k}(f)}{\sqrt{|f|}}.
    \end{split}\]
\end{proposition}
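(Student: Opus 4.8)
The only substantive input is the functional equation of Lemma~\ref{functional equation lemma}, together with the elementary identity
\[
\mathcal{L}_q(u,\chi)^{k}=\left(\sum_{f\in\mathcal{M}}\chi(f)u^{\deg f}\right)^{k}=\sum_{f\in\mathcal{M}}\chi(f)\,d_{k}(f)\,u^{\deg f},
\]
which holds because $\chi$ is completely multiplicative and $d_{k}(f)$ counts ordered factorisations of $f$ into $k$ monic parts. Since $\chi$ is nontrivial and primitive, $\mathcal{L}_q(u,\chi)$ is a polynomial, of degree $g$ in the odd case and $g+1$ in the even case by \eqref{RH formula}. The plan is to read off the coefficient of $u^{n}$, specialise at $u=q^{-1/2}$, split the resulting finite sum at degree $A$, and rewrite the high-degree tail using the functional equation.

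\emph{Odd case.} Here $\mathcal{L}_q(u,\chi)=\mathcal{L}_C(u,\chi)$ has degree $g$, so $\mathcal{L}_q(u,\chi)^{k}=\sum_{n=0}^{kg}c_{n}u^{n}$ with $c_{n}=\sum_{f\in\mathcal{M}_{n}}\chi(f)d_{k}(f)$, and $u=q^{-1/2}$ gives $L(1/2,\chi)^{k}=\sum_{n=0}^{kg}c_{n}q^{-n/2}$. Raising \eqref{FE odd case} to the $k$-th power and comparing coefficients of $u^{n}$ yields $c_{n}=\omega(\chi)^{k}\,q^{\,n-kg/2}\,\bar c_{kg-n}$ with $\bar c_{m}=\sum_{f\in\mathcal{M}_{m}}\bar\chi(f)d_{k}(f)$. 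Splitting $\sum_{n=0}^{kg}=\sum_{n=0}^{A}+\sum_{n=A+1}^{kg}$, substituting this relation in the second piece and reindexing by $m=kg-n$, the powers of $q$ collapse to $q^{-m/2}$ and the tail becomes $\omega(\chi)^{k}\sum_{m=0}^{kg-A-1}\bar c_{m}q^{-m/2}$. Recognising $\sum_{n=0}^{t}c_{n}q^{-n/2}=\sum_{f\in\mathcal{M}_{\leq t}}\chi(f)d_{k}(f)/\sqrt{|f|}$ (and its analogue for $\bar\chi$) gives the stated identity.

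\emph{Even case.} Now $\mathcal{L}_q(u,\chi)=(1-u)\mathcal{L}_C(u,\chi)$ with $\mathcal{L}_C(u,\chi)$ of degree $g$; substituting this on both sides of \eqref{FE even case} and cancelling the common factor $1-u$ shows that $\mathcal{L}_C$ again satisfies the clean functional equation $\mathcal{L}_C(u,\chi)=\omega(\chi)(\sqrt{q}\,u)^{g}\,\mathcal{L}_C\big((qu)^{-1},\bar\chi\big)$. Since $L(1/2,\chi)=(1-q^{-1/2})\,\mathcal{L}_C(q^{-1/2},\chi)$, we get $L(1/2,\chi)^{k}=(1-q^{-1/2})^{k}\,\mathcal{L}_C(q^{-1/2},\chi)^{k}$. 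I would then write $\mathcal{L}_C(u,\chi)^{k}=\mathcal{L}_q(u,\chi)^{k}(1-u)^{-k}$ and expand $(1-u)^{-k}=\sum_{i\geq0}\binom{k+i-1}{i}u^{i}$, so that the coefficient of $u^{N}$ in $\mathcal{L}_C(u,\chi)^{k}$ is the convolution $\sum_{i}\binom{k+i-1}{i}c_{N-i}$. Specialising at $u=q^{-1/2}$, splitting at degree $A$, treating the tail via the functional equation for $\mathcal{L}_C$ exactly as in the odd case, and then interchanging the order of summation — using the hockey-stick identity $\sum_{i=0}^{m}\binom{k+i-1}{i}=\binom{k+m}{m}$ to pass between the partial sums $\sum_{f\in\mathcal{M}_{\leq t}}\chi(f)d_{k}(f)/\sqrt{|f|}$ and the displayed weighted shape — produces the two sums over $\mathcal{M}_{\leq A-i}$ and $\mathcal{M}_{\leq kg-A-1-i}$ with weights $\binom{k+i-1}{i}$ and the global factor $(1-q^{-1/2})^{k}$.

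\emph{Main obstacle.} There is no analytic content: the functional equation is purely algebraic and the coefficient extraction is trivial. The difficulty lies entirely in the combinatorial bookkeeping of the even case, where one must carry the formal factor $(1-u)^{-k}$ through the computation in tandem with the functional equation and verify that the two truncation lengths $A-i$ and $kg-A-1-i$, together with the binomial weights $\binom{k+i-1}{i}$, emerge exactly as stated. The odd case is the clean prototype, and the even case is obtained from it by unwinding the Euler factor at the infinite place; keeping track of which summation index retains the $q^{-n/2}$-type weight when the order of summation is interchanged is the step most likely to go wrong.
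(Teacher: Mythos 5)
Your proposal follows the paper's proof essentially verbatim: expand $\mathcal{L}_q(u,\chi)^k$ (in the even case $F_\chi(u)=\bigl(\mathcal{L}_q(u,\chi)/(1-u)\bigr)^k$, which is your $\mathcal{L}_C(u,\chi)^k$) as a polynomial in $u$, reflect the coefficients of degree $>A$ via the functional equation, specialise at $u=q^{-1/2}$, and in the even case convolve with $(1-u)^{-k}=\sum_{i\geq 0}\binom{k+i-1}{i}u^i$ and interchange the order of summation. The odd case is complete and correct as you describe it.

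In the even case, however, the one step you defer --- ``keeping track of which summation index retains the $q^{-n/2}$-type weight when the order of summation is interchanged'' --- is exactly where the claimed display is not reproduced. Writing $b_n(\chi)=\sum_{i=0}^{n}\binom{k+i-1}{i}a_{n-i}(\chi)$ with $a_m(\chi)=\sum_{f\in\cM_m}\chi(f)d_k(f)$, the interchange gives
\[
\sum_{n=0}^{A}b_n(\chi)q^{-n/2}=\sum_{i=0}^{A}\binom{k+i-1}{i}\sum_{n=i}^{A}a_{n-i}(\chi)q^{-n/2}=\sum_{i=0}^{A}\binom{k+i-1}{i}\,q^{-i/2}\sum_{f\in\cM_{\leq A-i}}\frac{\chi(f)d_k(f)}{\sqrt{|f|}},
\]
so each inner sum carries an extra weight $q^{-i/2}$ (and likewise in the dual piece). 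The hockey-stick identity $\sum_{i\leq m}\binom{k+i-1}{i}=\binom{k+m}{m}$ plays no role in this computation and does not remove that factor. Hence the final step as you describe it does not literally produce the two displayed sums with bare weights $\binom{k+i-1}{i}$: you must either retain the $q^{-i/2}$ or justify its disappearance. (The paper's own proof performs the same interchange and silently drops the same factor in its last equality, so the discrepancy lies in the statement as much as in your sketch; but since your argument is the verification, this is the point you need to resolve rather than assert.)
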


\begin{proof}
First we assume that $\chi$ is odd, then by part (1) of Lemma \ref{functional equation lemma}, we have that
$$
\mathcal{L}_{q}(u, \chi)^k=\omega(\chi)^k(\sqrt{q} u)^{kg} \mathcal{L}_{q}\left(\frac{1}{q u}, \bar{\chi}\right)^k .
$$  Writing
$
\mathcal{L}_q\left(u, \chi\right)^{k}=\sum_{n=0}^{k g} b_n(\chi) u^{n}
$,
where $b_n(\chi)=\sum_{f \in \mathcal{M}_{n}} \chi(f) d_{k}(f)$, it follows that
$$
\sum_{n=0}^{k g} b_n(\chi) u^{n}=\omega(\chi)^k (\sqrt{q} u)^{kg} \sum_{n=0}^{k g} b_n(\bar{\chi}) \left(\frac{1}{qu}\right)^n
=\sum_{n=0}^{kg} b_n(\bar{\chi}) \omega(\chi)^k q^{kg/2-n} u^{kg-n}.$$
Therefore, we have that \[b_n(\chi)=b_{kg-n}(\bar{\chi})\omega(\chi)^k q^{n-kg/2},\] i.e., 
\[\sum_{f \in \mathcal{M}_{n}} \chi(f) d_{k}(f)=\sum_{f \in \mathcal{M}_{kg-n}} \omega(\chi)^k q^{n-kg/2}\bar{\chi}(f) d_{k}(f).\]
We find that:
\[\begin{split}\mathcal{L}_{q}(u, \chi)^k=& \sum_{n=0}^{A} \left(\sum_{f \in \mathcal{M}_{n}} \chi(f)d_k(f)\right) u^n+\sum_{n=A+1}^{kg} \left(\sum_{f \in \mathcal{M}_{n}} \chi(f)d_k(f)\right) u^n\\
= & \sum_{n=0}^{A} \left(\sum_{f \in \mathcal{M}_{n}} \chi(f)d_k(f)\right) u^n+ \sum_{n=A+1}^{kg} \left(\sum_{f \in \mathcal{M}_{kg-n}} \omega(\chi)^k q^{n-kg/2}\bar{\chi}(f) d_{k}(f)\right)u^n \\
= & \sum_{n=0}^{A} \left(\sum_{f \in \mathcal{M}_{n}} \chi(f)d_k(f)\right) u^n+ q^{kg/2}u^{kg}\omega(\chi)^k\sum_{n=0}^{kg-A-1} \left(\sum_{f \in \mathcal{M}_{n}} \bar{\chi}(f)  d_{k}(f)\right)(qu)^{-n}.
\end{split}\]
Plug in $u=q^{-1/2}$ to the above expression to get that 
\[L(1/2, \chi)^k=\sum_{f\in \cM_{\leq A}}\frac{\chi(f)d_k(f)}{\sqrt{|f|}}+ \omega(\chi)^k\sum_{f\in \cM_{\leq kg-A-1}}  \frac{\bar{\chi}(f) d_{k}(f)}{\sqrt{|f|}}.\] This proves part (1) of the Proposition.
\par Next we prove part (2) and assume that $\chi$ is even. In this case, we write
$$
F_\chi(u):=\left(\frac{\mathcal{L}_q\left(u, \chi\right)}{(1-u)}\right)^{k}=\sum_{n=0}^{k g} b_n(\chi) u^{n}.
$$
By the functional equation in part (2) of Lemma \ref{functional equation lemma}, 
\[F_\chi(u)=\omega(\chi)^k (\sqrt{q} u)^{kg}F_{\bar{\chi}}\left(\frac{1}{qu}\right)\]
and thus, $b_n(\chi)=b_{kg-n}(\bar{\chi})\omega(\chi)^k q^{n-kg/2}$.
It follows that 
\begin{align*}
    F_\chi(u)&= \sum_{n=0}^{A} b_n(\chi) u^{n}+\sum_{n=A+1}^{kg} b_n(\chi) u^{n}\\
    &=\sum_{n=0}^{A} b_n(\chi) u^{n}+\sum_{n=A+1}^{kg}b_{kg-n}(\bar{\chi})\omega(\chi)^k q^{n-kg/2}u^n\\
    &=\sum_{n=0}^{A} b_n(\chi) u^{n}+\omega(\chi)^k q^{kg/2}u^{kg}\sum_{n=0}^{kg-A-1}b_n(\bar \chi)(qu)^{-n}.\numberthis\label{functional equation}
\end{align*}
Now, if we  write
$$
\mathcal{L}_q\left(u, \chi\right)^{k}=\sum_{n=0}^{k g+k} a_n(\chi) u^{n}
$$
where $a_n(\chi)=\sum_{f \in \mathcal{M}_{n}} \chi(f) d_{k}(f)$.  
Note that $(1-u)^{-k}\mathcal{L}_q\left(u, \chi\right)^{k}=F_\chi(u)$. Using the Taylor expansion for $(1-u)^{-k}$ gives us the following relation: 
\begin{equation}\label{binomial inversion}
    b_n(\chi)=\sum_{i=0}^{n} \binom{k+i-1}{i} a_{n-i}(\chi).
\end{equation}
 Noting that $\mathcal{L}_q\left(u, \chi\right)^{k}=(1-u)^k\sum_{n=0}^{kg}b_n u^n$, we deduce from the functional equation \eqref{functional equation}:
\begin{align*}
    \mathcal{L}_q\left(u, \chi\right)^{k}&=(1-u)^k\left(\sum_{n=0}^{A} b_n(\chi) u^{n}+\omega(\chi)^k q^{kg/2}u^{kg}\sum_{n=0}^{kg-A-1}b_n(\bar \chi)(qu)^{-n}\right).
\end{align*}
Setting $u=q^{-1/2}$, we arrive at the following:
{\small\begin{align*}&\mathcal{L}_q\left(1/\sqrt{q}, \chi\right)^{k}\\
    & =(1-q^{-1/2})^k\left(\sum_{n=0}^{A} b_n(\chi) q^{-n/2}+\omega(\chi)^k \sum_{n=0}^{kg-A-1}b_n(\bar \chi)q^{-n/2}\right)\\&=(1-q^{-1/2})^k\sum_{i=0}^{A}\binom{k+i-1}{i}\left(\sum_{n=i}^{A} a_{n-i}(\chi) q^{-n/2}+\omega(\chi)^k \sum_{n=i}^{kg-A-1}a_{n-i}(\bar \chi)q^{-n/2}\right)\\&
 =  (1-q^{-1/2})^k \left(\sum_{i=0}^{A}\binom{k+i-1}{i}\sum_{f\in \cM_{\leq A-i}}\frac{\chi(f)d_k(f)}{\sqrt{|f|}}+ \sum_{i=0}^{kg-A-1}\binom{k+i-1}{i}\omega(\chi)^k\sum_{f\in \cM_{\leq kg-A-1-i}}  \frac{\bar{\chi}(f) d_{k}(f)}{\sqrt{|f|}}\right).
\end{align*}}
\end{proof}
\par For any odd character \(\chi\) on \(\mathbb{F}_q[T]\), let \(\tau(\chi)\) be the Gauss sum of its restriction to \(\mathbb{F}_q^\times\), given by  
\[
\tau(\chi) = \sum_{a \in \mathbb{F}_{q}^{\times}} \chi(a) \op{exp}\left(\frac{2\pi i \op{tr}_{\mathbb{F}_{q} / \mathbb{F}_{p}}(a)}{ p}\right),
\]
where $\op{tr}_{\F_q/\F_p}: \F_q\rightarrow \F_p$ is the trace map. This satisfies \(|\tau(\chi)| = q^{1/2}\), and we define the associated sign  
\[
\varepsilon(\chi) = q^{-1/2} \tau(\chi).
\] On the other hand, if $\chi$ is even, simply set $\tau(\chi):=1$. Given $a\in \F_q\left((\frac{1}{T})\right)$, let $a_1$ be the coefficient of $\frac{1}{T}$. Hayes defined an exponential function $e_q: \F_q\left((\frac{1}{T})\right)\rightarrow  \mathbb{C}$ as follows:
\[e_q(a):=\op{exp}\left(\frac{2\pi i  \op{tr}_{\F_q/\F_p} (a_1)}{p}\right).\] It is easy to check that $e_q(\cdot)$ has the following properties:
\begin{itemize}
    \item $e_q(a)=1$ for $a\in \F_q[T]$, 
    \item $e_q(a+b)=e_q(a)e_q(b)$ for all $a, b\in \F_q\left(\frac{1}{T}\right)$,
    \item if $a, b, h\in \F_q[T]$ are such that $a\equiv b\pmod{h}$, then $e_q(a/h)=e_q(b/h)$.
\end{itemize}
For a primitive character \(\chi\) of modulus \(h\) on \(\mathbb{F}_q[T]\), the Gauss sum is defined as  
\[
G(\chi) = \sum_{a \bmod h} \chi(a) e_q \left(\frac{a}{h} \right).
\]  
The root number is expressed in terms of Gauss sums as follows:  
\begin{equation}\label{omega function}
    \omega(\chi) =  
\begin{cases}  
\dfrac{1}{\tau(\chi)} q^{-\frac{\op{deg}(h) - 1}{2}} G(\chi) & \text{if } \chi \text{ is odd}, \\  \\
\dfrac{1}{\sqrt{q}} q^{-\frac{\op{deg}(h) - 1}{2}} G(\chi) & \text{if } \chi \text{ is even}, 
\end{cases}  
\end{equation}
cf. \cite[Corollary 2.3]{DFLCubic}.

\subsection{Primitive characters of $\F_q[T]$}\label{section 2.5}
\par We introduce notation for cubic characters. Let \( q \) be an odd prime power with \( q \equiv 1 \mod 3 \), and let \( P \) be a prime polynomial in \( \mathbb{F}_q[T] \). The cubic residue symbol \( \chi_P \) is defined for \( a \in \mathbb{F}_q[T] \) by  
\[
\chi_P(a) = \begin{cases}  
0, & P \mid a, \\  
\alpha, & \text{otherwise},  
\end{cases}  
\]
where \( \alpha \) is the unique root of unity in \( \mathbb{C} \) satisfying  
\[
a^{(q^{\deg P} - 1)/3} \equiv \Omega(\alpha )\mod P.
\]  
For a monic polynomial \( G = P_1^{e_1} \cdots P_s^{e_s} \) with distinct irreducible factors \( P_i \), we set  
\[
\chi_G = \chi_{P_1}^{e_1} \cdots \chi_{P_s}^{e_s},
\]
a cubic character modulo \( P_1 \cdots P_s \). It is primitive if each \( e_i \) is \( 1 \) or \( 2 \), in which case its conductor is the square-free polynomial \( F = P_1 \cdots P_s \). 
\par Next, let \( q \) be an odd prime power with \( q \equiv 2 \mod 3 \), then there is no primitive cubic character if \( P \) be an irreducible monic polynomial  of odd dgree in \( \mathbb{F}_q[T] \) since \(3\nmid q^{\op{deg}(P)}-1\). For even degree \( P \)  irreducible monic polynomial and \( a \in \mathbb{F}_q[T] \), the cubic residue symbol  is  
\(\chi_P(a) = \alpha \), where \( \alpha \) is the unique root of unity in \( \mathbb{C} \) satisfying  
\[
a^{(q^{\deg P} - 1)/3} \equiv \Omega(\alpha) \mod P.
\] 
Here, $\Omega$ takes values in the cubic roots of unity in \(\mathbb{F}_{q^2}\). For a monic polynomial \( G = P_1^{e_1} \cdots P_s^{e_s} \) with distinct irreducible factors \( P_i \) of even degree, we set  
\[
\chi_G = \chi_{P_1}^{e_1} \cdots \chi_{P_s}^{e_s},
\]
a cubic character modulo \( P_1 \cdots P_s \). It is primitive if each \( e_i \) is \( 1 \) or \( 2 \). Its conductor is the square-free polynomial \( F=  P_1 \cdots P_s \). Consider the Galois extension $\F_{q^2}(T)/\F_q(T)$ with $\op{Gal}\left(\F_{q^2}(T)/\F_q(T)\right)=\langle \sigma\rangle$ where \begin{equation}\label{sigma defn}\sigma\left(\frac{\sum_{i} a_i T^i}{\sum_{i} b_i T^i}\right):=\frac{\sum_{i} a_i^q T^i}{\sum_{i} b_i^q T^i}.\end{equation} Given $f\in \F_q$ we set $\tilde{f}:=\sigma(f)$. Let $P$ be a prime of $\F_q[T]$. Note that $P$ splits in $\F_{q^2}[T]$ if and only if $\op{d}(P)$ is even, in which case we write $P=\pi \widetilde{\pi}$, where $\pi$ is a prime of $\F_{q^2}[T]$. It is easy to see that the restriction of $\chi_{\pi}$ (resp. $\chi_{\tilde{\pi}}$) to $\F_q[T]$ is $\chi_P$ (resp. $\overline{\chi_P}$). Then by running over all the characters $\chi_{F}$ where $F \in \mathbb{F}_{q^{2}}[T]$ is square-free and not divisible by a prime $P$ of $\mathbb{F}_{q}[T]$, we are counting exactly the characters over $\mathbb{F}_{q^{2}}[T]$ whose restrictions are cubic characters over $\mathbb{F}_{q}[T]$, and each character over $\mathbb{F}_{q}[T]$ is counted exactly once.
\par When $q\equiv 2\mod{3}$, any cubic character of $\F_q^\times$ is trivial, and hence, any cubic character of $\F_q[T]$ is even.
By \eqref{RH formula}, if $F \in \mathbb{F}_{q}[T]$ is the conductor of a cubic primitive character $\chi$ over $\mathbb{F}_{q}[T]$, it follows that $\operatorname{deg}(F)=g+2$. By the classification above, it follows that $F=P_{1} \ldots P_{s}$ for distinct primes of even degree, and the character $(\bmod F)$ is the restriction of a character
of conductor $\pi_{1} \ldots \pi_{s}$ over $\mathbb{F}_{q^{2}}[T]$, where $\pi_{i}$ is one of the primes lying above $P_{i}$. Then the degree of the conductor of this character over $\mathbb{F}_{q^{2}}[T]$ is equal to $g / 2+1$. In what follows, $P$ will denote a prime polynomial of $\F_q[T]$. We have the following result.
\begin{lemma}\label{cubic sum q 2 mod 3}
For $q \equiv 2 \pmod{3}$, we have that
$$
\sum_{\substack{\chi \text { primitive cubic } \\ \text { genus }(\chi)=g}} L_{q}(1 / 2, \chi)^k=\sum_{\substack{F \in \mathcal{H}_{q^{2}, g / 2+1} \\ P \mid F \Rightarrow P \notin \mathbb{F}_{q}[T]}} L_{q}\left(1 / 2, \chi_{F}\right)^k,
$$
where $\chi_F$ is defined over $\F_{q^2}[T]$. 
\end{lemma}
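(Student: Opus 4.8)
The plan is to reduce the identity to an explicit bijection between the two index sets under which corresponding summands are equal. Since $q\equiv 2\pmod 3$, every cubic character of $\F_q[T]$ is even, so by the Riemann--Hurwitz formula \eqref{RH formula} a primitive cubic character $\chi$ of genus $g$ has conductor $h$ with $\deg(h)=g+2$; in particular $g$ is even, since by the classification in Section~\ref{section 2.5} the conductor factors as $h=P_1\cdots P_s$ with the $P_i$ distinct primes of $\F_q[T]$ of \emph{even} degree, and $\chi=\prod_{i=1}^s\chi_{P_i}^{e_i}$ with each $e_i\in\{1,2\}$. (If $g$ is odd, both sides are empty and there is nothing to prove.)

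First I would attach to such a $\chi$ a polynomial over $\F_{q^2}[T]$. Each $P_i$ splits in $\F_{q^2}[T]$, say $P_i=\pi_i\widetilde{\pi_i}$, and since $\deg(P_i)$ is even one has $\deg_{q^2}(\pi_i)=\tfrac12\deg_q(P_i)$; moreover the restriction of $\chi_{\pi_i}$ (resp. $\chi_{\widetilde{\pi_i}}$) to $\F_q[T]$ is $\chi_{P_i}$ (resp. $\chi_{P_i}^2$). Put $F:=\prod_{i=1}^s\mathfrak{p}_i$, where $\mathfrak{p}_i:=\pi_i$ if $e_i=1$ and $\mathfrak{p}_i:=\widetilde{\pi_i}$ if $e_i=2$. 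Then $F$ is a square-free monic polynomial of $\F_{q^2}[T]$ with $\deg_{q^2}(F)=\tfrac12\deg_q(h)=g/2+1$, its prime factors are all split primes of $\F_q[T]$, and no two conjugate primes $\pi_i,\widetilde{\pi_i}$ both occur, so no prime of $\F_q[T]$ divides $F$; hence $F$ lies in the index set of the right-hand sum, and by construction $\chi_F|_{\F_q[T]}=\chi$.

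Next I would verify that $\chi\mapsto F$ is a bijection onto $\{F\in\mathcal{H}_{q^2,g/2+1}:\ P\mid F\Rightarrow P\notin\F_q[T]\}$. For surjectivity, take such an $F=\mathfrak{p}_1\cdots\mathfrak{p}_s$: each $\mathfrak{p}_i$ is not inert over $\F_q[T]$ (else it would be a prime of $\F_q[T]$ dividing $F$), so it lies above a prime $P_i$ of even degree with $P_i=\mathfrak{p}_i\widetilde{\mathfrak{p}_i}$, and $\widetilde{\mathfrak{p}_i}\nmid F$ (else $P_i\mid F$, contradicting the hypothesis); thus $\chi_F|_{\F_q[T]}=\prod_i\chi_{P_i}^{e_i}$ with $e_i\in\{1,2\}$ is a primitive cubic character of conductor $P_1\cdots P_s$, whose degree is $2\deg_{q^2}(F)=g+2$, so of genus $g$ by \eqref{RH formula}, and it maps back to $F$. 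For injectivity, $\chi=\chi_F|_{\F_q[T]}$ already determines its conductor $P_1\cdots P_s$ and, for each $i$, whether the local factor is $\chi_{P_i}$ or $\chi_{P_i}^2$, hence whether $\pi_i$ or $\widetilde{\pi_i}$ is the factor of $F$ above $P_i$; so $F$ is recovered from $\chi$. In particular $\chi$ and $\overline{\chi}$ correspond to the distinct polynomials $F$ and $\widetilde F$, both of which satisfy the divisibility condition, so both characters are accounted for on the right.

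Finally, since under this bijection $\chi_F$ restricts to $\chi$ on $\F_q[T]$, the $L$-functions, and hence their $k$-th powers $L_q(1/2,\chi_F)^k=L_q(1/2,\chi)^k$, coincide, and re-indexing the left-hand sum by $F$ gives the claimed identity. The whole argument is bookkeeping; the one point that genuinely needs care is the degree accounting, namely that $\mathrm{genus}(\chi)=g$ translates to $\deg_{q^2}(F)=g/2+1$ through the even-degree splitting $\deg_{q^2}(\pi)=\tfrac12\deg_q(P)$ and the Riemann--Hurwitz identity $\deg(h)=g+2$, together with the check that the condition ``$P\mid F\Rightarrow P\notin\F_q[T]$'' is precisely what singles out the polynomials $F$ arising from primitive cubic characters of $\F_q[T]$.
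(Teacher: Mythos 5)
Your proposal is correct and follows essentially the same route as the paper, which establishes exactly this bijection (between primitive cubic characters of genus $g$ over $\F_q[T]$ and square-free $F\in\F_{q^2}[T]$ of degree $g/2+1$ with no prime factor in $\F_q[T]$) in the discussion of Section~\ref{section 2.5} immediately preceding the lemma, including the same degree accounting via \eqref{RH formula}. Your write-up merely makes the injectivity and surjectivity checks explicit where the paper asserts them.
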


We conclude this subsection with an important fact about the values of $\chi_F$ when restricted to $\F_q[T]$. This result will be invoked frequently in the arguments that follow.

\begin{lemma}\label{chi_D(f)=1 lemma}
    Suppose that $q\equiv 2\pmod{3}$, $F$ be a squarefree polynomial in $\F_q[T]$ and $\chi_F$ is the character associated to $F$ over $\F_{q^2}[T]$. Then, for any squarefree $f\in \F_q[T]$, we have that $\chi_F(f)=1$.
\end{lemma}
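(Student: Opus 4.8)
The plan is to reduce to the case where both $F$ and $f$ are prime, and then invoke the reciprocity behavior of the cubic residue symbol together with the observation that the two primes lying over a given prime $P \in \F_q[T]$ of even degree carry conjugate symbols. First, since $\chi_F$ is completely multiplicative in its argument and (by its very definition $\chi_G = \chi_{P_1}^{e_1}\cdots \chi_{P_s}^{e_s}$) multiplicative in the modulus $F$ as well, it suffices to prove $\chi_\pi(P) = 1$ for a single prime $\pi \mid F$ in $\F_{q^2}[T]$ lying over an even-degree prime $P \in \F_q[T]$, and a single prime $P' \in \F_q[T]$ dividing $f$; the general statement follows by taking products. (One should also dispose of the degenerate cases: if $\gcd(F,f)\neq 1$ then $f$ is not squarefree after removing common factors, or one notes $\chi_F(f)$ is a root of unity and tracks it carefully; but the main content is the coprime prime-on-prime case.)

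Next I would unwind the definition. By construction $\chi_\pi(P')$ is the unique cube root of unity $\alpha$ with $(P')^{(q^{2\deg\pi}-1)/3} \equiv \Omega(\alpha) \pmod{\pi}$, where here $\deg \pi$ is the degree over $\F_{q^2}$ and $\Omega$ lands in $\mu_{3} \subset \F_{q^2}^\times$ (indeed $\mu_3 \subset \F_{q^2}^\times$ precisely because $q^2 \equiv 1 \pmod 3$ when $q\equiv 2\pmod 3$). The key point: $P' \in \F_q[T]$, so $P'$ is fixed by the Frobenius $\sigma$ generating $\op{Gal}(\F_{q^2}(T)/\F_q(T))$. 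Writing $q^{2\deg\pi} - 1 = (q^{\deg\pi}-1)(q^{\deg\pi}+1)$ and using that $P'$ lies in $\F_q[T]$, I would show that the residue $(P')^{(q^{2\deg\pi}-1)/3} \bmod \pi$ is actually a norm from $\F_{q^{2\deg\pi}}$ down to a subfield, or more directly, that raising $P'$ (an element of the prime field data of $\F_q[T]$) to this exponent lands in $\F_q$-rational data and is forced to be a cube there because $3 \mid q^{\deg\pi} + 1$ is false but $3 \nmid q - 1$ makes cubing a bijection on $\F_q^\times$. The cleanest route: since $q \equiv 2 \pmod 3$, the cubing map $x \mapsto x^3$ is a bijection on $\F_q^\times$ and on $(\F_q[T]/P')^\times$ for any even-degree $P'$; but one must instead argue inside $\F_{q^2}[T]/\pi$. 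The mechanism is that $\op{Frob}_q$ acts on $\F_{q^2}[T]/\pi \cong \F_{q^{2\deg\pi}}$, it fixes $P' \bmod \pi$, hence fixes $(P')^{(q^{2\deg\pi}-1)/3} \bmod \pi = \Omega(\chi_\pi(P'))$; thus $\Omega(\chi_\pi(P'))$ lies in the fixed field $\F_{q^{\deg\pi}}$ (or $\F_q$), and since the only cube roots of unity in a field of order $q^{\deg \pi}$ with $q\equiv 2\pmod 3$ and $\deg\pi$... — here one uses that $\Omega(\mu_3)\subset \F_{q^2}$ but the fixed subfield under $\op{Frob}_q$ intersected with $\mu_3$ is just $\{1\}$ because $3 \nmid q-1$. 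Hence $\chi_\pi(P') = 1$.

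I would then reassemble: for general squarefree $F = \pi_1\cdots\pi_s$ over $\F_{q^2}[T]$ (with each $\pi_i$ over an even-degree $P_i\in\F_q[T]$) and general squarefree $f = P_1'\cdots P_r' \in \F_q[T]$, complete multiplicativity gives $\chi_F(f) = \prod_{i,j}\chi_{\pi_i}(P_j') = \prod_{i,j} 1 = 1$, provided $\gcd(F \cap \F_q[T], f) = 1$ so that no symbol vanishes — and if some $P_i = P_j'$ then $\chi_{\pi_i}(P_j') = 0$ and the claim $\chi_F(f)=1$ would fail, so implicitly the lemma is applied with $f$ coprime to the conductor, or the statement is understood with that caveat (as is standard); I would state this coprimality hypothesis explicitly if it is not already forced by context. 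The main obstacle I anticipate is getting the Galois-descent step airtight: carefully justifying that $\op{Frob}_q$-invariance of $P' \bmod \pi$ forces the value $\Omega(\chi_\pi(P'))$ into a subfield containing no nontrivial cube roots of unity, which hinges on correctly bookkeeping that $\mu_3 \subset \F_{q^2}$ but $\mu_3 \cap \F_q = \{1\}$ when $q \equiv 2 \pmod 3$. Everything else is formal manipulation of the multiplicative definitions.
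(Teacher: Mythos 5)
Your reduction to a single symbol $\chi_{\pi}(P')$, with $\pi$ a prime of $\F_{q^2}[T]$ over an even-degree prime of $\F_q[T]$ and $P'$ a prime of $\F_q[T]$, is where the argument breaks: the claim that each individual $\chi_{\pi}(P')$ equals $1$ is false, and the Galois-descent mechanism you propose for it does not apply. The map that fixes $P'$ is the coefficient Frobenius $\sigma$ (raising coefficients to the $q$-th power), and $\sigma$ does not act on $\F_{q^2}[T]/(\pi)$ --- it carries it isomorphically onto $\F_{q^2}[T]/(\widetilde{\pi})$ with $\widetilde{\pi}=\sigma(\pi)$ the conjugate prime. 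The arithmetic Frobenius $x\mapsto x^q$ of the residue field does act on $\F_{q^2}[T]/(\pi)$, but it does not fix $P'\bmod \pi$: that would force $P'\bmod\pi\in\F_q$, which already fails for $P'=T$, whose residue is a root of $\pi$ generating the residue field over $\F_{q^2}$. Concretely, take $q=5$ and $\pi$ of degree one over $\F_{25}$ with root $\beta\neq 0$; then $\chi_{\pi}(T)=\Omega^{-1}(\beta^{8})$, which is a nontrivial cube root of unity whenever $\beta$ is not a cube in $\F_{25}^{\times}$. So $\Omega(\chi_{\pi}(P'))$ does not descend to a subfield without $\mu_3$, and the conclusion $\chi_\pi(P')=1$ collapses.

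What is actually true --- and what the paper proves --- is a cancellation between conjugate factors rather than triviality of each factor. Since $F$ lies in $\F_q[T]$, its prime divisors in $\F_{q^2}[T]$ over an even-degree prime $P$ occur in pairs $\pi,\widetilde{\pi}$, both dividing $F$; the isomorphism $\sigma:\F_{q^2}[T]/(\pi)\to\F_{q^2}[T]/(\widetilde{\pi})$ fixes $f$ and yields $\chi_{\widetilde{\pi}}(f)=\chi_{\pi}(f)^{q}=\chi_{\pi}(f)^{-1}$ because $q\equiv 2\pmod 3$, so only the product $\chi_{\pi}(f)\chi_{\widetilde{\pi}}(f)=1$ is trivial. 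Your argument also omits the odd-degree prime divisors of $F$, which remain inert in $\F_{q^2}[T]$ and need a separate treatment: there the residue of $f$ lies in the subfield $\F_{q^{\deg P}}$ with $q^{\deg P}\equiv 2\pmod 3$, which contains no nontrivial cube roots of unity, forcing that symbol to be $1$. (Your observation that the lemma implicitly assumes $(F,f)=1$ is correct but harmless; in the application the sum is restricted to coprime pairs.)
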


\begin{proof}
\par Let $P$ be a prime polynomial in $\F_q[T]$ dividing $F$. Since $\chi_F = \prod_{P \mid F} \chi_P$, it suffices to show that $\chi_P(f) = 1$ for each such $P$.
\par First, suppose that $\deg P$ is odd. In this case, the prime ideal $(P)$ remains inert in $\F_{q^2}[T]$. By definition, $\chi_P(f) = \alpha$, where $\alpha \in \mathbb{C}$ is the unique root of unity satisfying
$$
f^{(q^{2\deg P} - 1)/3} \equiv \Omega(\alpha) \pmod{P}.
$$
\noindent The element $f^{(q^{2\deg P} - 1)/3}$ lies in $\left( \F_q[T]/(P) \right)^\times$, and is a cube root of unity. But since $\deg P$ is odd, $\F_q[T]/(P) \cong \F_{q^{\deg P}}$, and this field contains no nontrivial cube roots of unity when $q \equiv 2 \pmod{3}$. It follows that $\alpha = 1$, and hence $\chi_P(f) = 1$.

\par Now suppose that $\deg P$ is even. Then $P$ splits in $\F_{q^2}[T]$ as a product of two distinct monic irreducibles $Q$ and $\widetilde{Q}$, with $\widetilde{Q} = \sigma(Q)$, where $\sigma$ denotes the generator of $\op{Gal}(\F_{q^2}(T)/\F_q(T))$ as defined in \eqref{sigma defn}. We then have $\chi_P = \chi_Q \chi_{\widetilde{Q}}$. Moreover, $\sigma$ induces an isomorphism
$$
\sigma : \left( \F_{q^2}[T]/(Q) \right)^\times \xrightarrow{\sim} \left( \F_{q^2}[T]/(\widetilde{Q}) \right)^\times,
$$
\noindent which implies that $\chi_{\widetilde{Q}}(f) = \chi_Q(f)^q$. Since $\chi_Q(f)$ is a cube root of unity and $q \equiv 2 \pmod{3}$, we conclude that $\chi_{\widetilde{Q}}(f) = \chi_Q(f)^{-1}$. Thus,
$$
\chi_P(f) = \chi_Q(f) \chi_{\widetilde{Q}}(f) = 1,
$$
as desired.

\end{proof}

\subsection{Averages of Gauss sums}
\par For $q\equiv 1\pmod{6}$, let $f\in \F_q[T]$ and $\chi_f$ the associated cubic character. Define the generalized Gauss sum as follows
\begin{equation*}
G_{q}(V, f)=\sum_{u(\bmod f)} \chi_{f}(u) e_{q}\left(\frac{u V}{f}\right).
\end{equation*}
If \( (f_1, f_2) = 1 \), then  
\[
G_q(V, f_1 f_2) = \chi_{f_1}(f_2)^2 G_q(V, f_1) G_q(V, f_2) = G_q(V f_2, f_1) G_q(V, f_2).
\]  
If \( V = V_1 P^\alpha \) where \( P \nmid V_1 \), then  
\[
G_q(V, P^i) =  
\begin{cases}  
0 & \text{if } i \leq \alpha \text{ and } i \not\equiv 0 \pmod{3}, \\  
\varphi(P^i) & \text{if } i \leq \alpha \text{ and } i \equiv 0 \pmod{3}, \\  
-|P|_q^{i-1} & \text{if } i = \alpha+1 \text{ and } i \equiv 0 \pmod{3}, \\  
\varepsilon(\chi_{P^i}) \omega(\chi_{P^i}) \chi_{P^i}(V_1^{-1}) |P|_q^{i-\frac{1}{2}} & \text{if } i = \alpha+1 \text{ and } i \not\equiv 0 \pmod{3}, \\  
0 & \text{if } i \geq \alpha+2.
\end{cases}  
\]  
\noindent We recall that \( \varepsilon(\chi) = 1 \) when \( \chi \) is even. For the case of \( \chi_{P^i} \), this happens if \( 3 \mid \deg(P^i) \). Moreover, if $(a, f)=1$, it is easy to see that
\begin{equation}\label{gauss sum relation}
G_{q}(a V, f)=\overline{\chi_f}(a) G_{q}(V, f).
\end{equation}
For further details, we refer to \cite[Lemma 2.12]{DFLCubic}.
\begin{remark}
    Note that the Gauss sum $G_{q^2}(V, f)$ is defined for odd $q$ with $q \equiv 2 \pmod{3}$, and we shall make use of its properties in what follows. 
\end{remark}

\begin{proposition}
     Let $f\in \mathbb{F}_{q}[T]$ be a monic polynomial and $n:=\op{deg}(f)$. Then the following assertions hold.
     \begin{enumerate}
         \item Suppose that $n\equiv 0\pmod{3}$. Then, $$
\sum_{h \in \mathcal{M}_{q, m}} \chi_{f}(h)=\frac{q^{m}}{|f|_{q}}\left[G_{q}(0, f)+(q-1) \sum_{V \in \mathcal{M}_{q, \leq n-m-2}} G_{q}(V, f)-\sum_{V \in \mathcal{M}_{q, n-m-1}} G_{q}(V, f)\right] .
$$
\item Suppose that $n\not \equiv 0\pmod{3}$. Then we have that $$
\sum_{h \in \mathcal{M}_{q, m}} \chi_{f}(h)=\frac{q^{m+\frac{1}{2}}}{|f|_{q}} \overline{\varepsilon\left(\chi_{f}\right)} \sum_{V \in \mathcal{M}_{q, n-m-1}} G_{q}(V, f).
$$
     \end{enumerate}

\end{proposition}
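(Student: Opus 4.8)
The plan is to carry out a Fourier computation on $\F_q[T]/(f)$, following the method of \cite{DFLCubic}. First I would detect the residue class of $h$ modulo $f$ by means of the additive orthogonality relation $\frac{1}{|f|_q}\sum_{V\bmod f}e_q(bV/f)$, which equals $1$ if $f\mid b$ and $0$ otherwise; this gives the identity $\chi_f(h)=\sum_{u\bmod f}\chi_f(u)\cdot\frac{1}{|f|_q}\sum_{V\bmod f}e_q\big(\tfrac{(h-u)V}{f}\big)$. Summing over $h\in\mathcal{M}_{q,m}$, interchanging the order of summation, and using $\sum_{u\bmod f}\chi_f(u)e_q(-uV/f)=G_q(V,f)$ (substitute $u\mapsto-u$ and note $\chi_f(-1)=1$, a cube root of unity squaring to $1$ being $1$), one obtains
\[
\sum_{h\in\mathcal{M}_{q,m}}\chi_f(h)=\frac{1}{|f|_q}\sum_{V\bmod f}G_q(V,f)\sum_{h\in\mathcal{M}_{q,m}}e_q\!\left(\tfrac{hV}{f}\right).
\]

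The crucial step is the inner sum over $h$. Writing $h=T^m+a_{m-1}T^{m-1}+\cdots+a_0$, reducing to representatives with $\deg V<n$, and expanding $V/f=\sum_{j\ge1}c_jT^{-j}$ at $\infty$, the coefficient of $T^{-1}$ in $hV/f$ equals $c_{m+1}+\sum_{i=0}^{m-1}a_ic_{i+1}$. By the definition of $e_q$ and by orthogonality of the additive characters $\psi(\alpha):=\exp\!\big(2\pi i\,\op{tr}_{\F_q/\F_p}(\alpha)/p\big)$ of $\F_q$, this yields
\[
\sum_{h\in\mathcal{M}_{q,m}}e_q\!\left(\tfrac{hV}{f}\right)=\psi(c_{m+1})\prod_{j=1}^{m}\Big(\sum_{\beta\in\F_q}\psi(\beta c_j)\Big),
\]
which vanishes unless $c_1=\cdots=c_m=0$, i.e.\ unless $\deg V\le n-m-1$; when $\deg V\le n-m-1$ it equals $q^m\psi(c_{m+1})$, where $c_{m+1}=0$ if $\deg V<n-m-1$, while $c_{m+1}$ is the leading coefficient of $V$ if $\deg V=n-m-1$.

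It then remains to split the surviving $V$ (those with $\deg V\le n-m-1$) into the three cases $V=0$; $V\ne0$ with $\deg V\le n-m-2$; and $\deg V=n-m-1$. On the latter two I would write $V=aW$ with $W$ monic and $a=\op{lead}(V)\in\F_q^\times$, and apply $G_q(aW,f)=\overline{\chi_f}(a)G_q(W,f)$, which gives
\[
\sum_{h\in\mathcal{M}_{q,m}}\chi_f(h)=\frac{q^m}{|f|_q}\big(G_q(0,f)+c_1S_1+c_2S_2\big),
\]
where $S_1=\sum_{W\in\mathcal{M}_{q,\leq n-m-2}}G_q(W,f)$, $S_2=\sum_{W\in\mathcal{M}_{q,n-m-1}}G_q(W,f)$, $c_1=\sum_{a\in\F_q^\times}\overline{\chi_f}(a)$ and $c_2=\sum_{a\in\F_q^\times}\overline{\chi_f}(a)\psi(a)$. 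The $n\bmod3$ dichotomy now comes from the restriction of $\chi_f$ to $\F_q^\times$: from $a^{(q^{\deg P}-1)/3}=\big(a^{(q-1)/3}\big)^{\deg P}$ for $a\in\F_q^\times$ one gets $\chi_P|_{\F_q^\times}=\chi_3^{\deg P}$, hence $\chi_f|_{\F_q^\times}=\chi_3^{\,n}$ by multiplicativity. If $3\mid n$ then $\chi_f|_{\F_q^\times}$ is trivial, so $c_1=q-1$ and $c_2=\sum_{a\in\F_q^\times}\psi(a)=-1$, giving part (1); if $3\nmid n$ then $c_1=0$, $G_q(0,f)=\sum_{u\bmod f}\chi_f(u)=0$ (as $\chi_f$ is already nontrivial on $\F_q^\times$), and $c_2=\overline{\tau(\chi_f)}=q^{1/2}\,\overline{\varepsilon(\chi_f)}$, giving part (2).

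The step I expect to be most delicate is the evaluation of the inner sum over $h$: pinning down the exact degree thresholds ($\deg V<n-m-1$ versus $\deg V=n-m-1$) and carrying the leading-coefficient twist $\psi(\op{lead}(V))$ correctly through the substitution $V=aW$. The identification $\chi_f|_{\F_q^\times}=\chi_3^{\,n}$ is the one conceptual input that turns that twist into the clean separation between parts (1) and (2).
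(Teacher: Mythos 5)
Your argument is correct. The paper does not actually prove this proposition---its ``proof'' is simply the citation to \cite[Proposition 2.13]{DFLCubic}---and your Fourier/orthogonality computation (detecting $h \bmod f$ additively, evaluating $\sum_{h\in\mathcal{M}_{q,m}}e_q(hV/f)$ to cut $V$ down to $\deg V\le n-m-1$ with the leading-coefficient twist at the boundary, then using $\chi_f|_{\F_q^\times}=\chi_3^{\deg f}$ to split the two cases) is essentially the argument of that reference, carried out correctly.
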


\begin{proof}
    The result above is \cite[Proposition 2.13]{DFLCubic}.
\end{proof}

Next we state results regarding the averages of Gauss sums. Let

\[\Psi_{q}(f, u)=\sum_{F \in \mathcal{M}_{q}} G_{q}(f, F) u^{\op{deg}(F)}\]
and
\begin{equation*}
\tilde{\Psi}_{q}(f, u)=\sum_{\substack{F \in \mathcal{M}_{q} \\(F, f)=1}} G_{q}(f, F) u^{\op{deg}(F)}.
\end{equation*}

For $a, n \in \mathbb{Z}$ and $n$ positive, we denote by $[a]_{n}$ the residue of $a$ modulo $n$ such that $0 \leq[a]_{n} \leq n-1$.
\begin{proposition}\label{Gauss average Propn}
    Assume that $q\equiv 1\pmod{6}$. Let $f=f_{1} f_{2}^{2} f_{3}^{3}$ with $f_{1}$ and $f_{2}$ square-free and coprime. We have
$$
\begin{aligned}
\sum_{\substack{F \in \mathcal{M} q, d \\
(F, f)=1}} G_{q}(f, F)= & \delta_{f_{2}=1} \frac{q^{\frac{4 d}{3}-\frac{4}{3}\left[d+\operatorname{deg}\left(f_{1}\right)\right]_{3}}}{\zeta_{q}(2)\left|f_{1}\right|_{q}^{2 / 3}} \overline{G_{q}\left(1, f_{1}\right)} \rho\left(1,\left[d+\operatorname{deg}\left(f_{1}\right)\right]_{3}\right) \prod_{P \mid f_{1} f_{3}^{*}}\left(1+\frac{1}{|P|_{q}}\right)^{-1} \\
& +O\left(\delta_{f_{2}=1} \frac{q^{\frac{d}{3}}+\varepsilon d}{\left|f_{1}\right|_{q}^{\frac{1}{6}}}\right)+\frac{1}{2 \pi i} \oint_{|u|=q^{-\sigma}} \frac{\tilde{\Psi}_{q}(f, u)}{u^{d}} \frac{d u}{u}
\end{aligned}
$$
with $2 / 3<\sigma<4 / 3$ and $\rho\left(1,\left[d+\operatorname{deg}\left(f_{1}\right)\right]_{3}\right)$ is given by \cite[(28)]{DFLCubic}. Moreover, we have
$$
\frac{1}{2 \pi i} \oint_{|u|=q^{-\sigma}} \frac{\tilde{\Psi}_{q}(f, u)}{u^{d}} \frac{d u}{u} \ll q^{\sigma d}|f|_{q}^{\frac{1}{2}\left(\frac{3}{2}-\sigma\right)}
$$
\end{proposition}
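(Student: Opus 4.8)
The plan is to convert the coefficient sum into a contour integral by Perron's formula and then to exploit the meromorphic continuation of the generating function $\tilde\Psi_q(f,u)$. By the prime-power evaluation of $G_q(\cdot,\cdot)$ recorded above, $G_q(f,F)=0$ whenever $F$ is coprime to $f$ but not squarefree, while $|G_q(f,F)|=|f|_q^{0}\cdot|F|_q^{1/2}$ (that is, exactly $|F|_q^{1/2}$) for squarefree $F$ coprime to $f$. Hence $\tilde\Psi_q(f,u)=\sum_{F\ \mathrm{sqfree},\,(F,f)=1}G_q(f,F)u^{\deg F}$ converges absolutely for $|u|<q^{-3/2}$, and Theorem~\ref{Perron's thm}(1) gives
\[
\sum_{\substack{F\in\mathcal{M}_{q,d}\\(F,f)=1}}G_q(f,F)=\frac{1}{2\pi i}\oint_{|u|=r}\frac{\tilde\Psi_q(f,u)}{u^{d+1}}\,du,\qquad 0<r<q^{-3/2}.
\]
I would then enlarge the radius $r$, sweeping the contour outward past the circle $|u|=q^{-4/3}$ on which $\tilde\Psi_q(f,\cdot)$ acquires poles, to a circle $|u|=q^{-\sigma}$ with $2/3<\sigma<4/3$; the residues crossed in this process produce the main term, and the shifted integral is precisely the error integral in the statement.

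The analytic core is the continuation and polar data of $\tilde\Psi_q(f,u)$. Twisted multiplicativity of the generalized Gauss sums together with cubic reciprocity over $\mathbb{F}_q[T]$ identify $\Psi_q(f,\cdot)$, and hence $\tilde\Psi_q(f,\cdot)$ after stripping the finitely many Euler factors at primes dividing $f$, with a Kubota-type Dirichlet series; equivalently, with the $f$-th Fourier--Whittaker coefficient of an Eisenstein series on the cubic metaplectic cover of $\mathrm{GL}_2$ over $\mathbb{F}_q(T)$. By the work of Kubota \cite{kubota} and Hoffstein \cite{Hoffsteinmetaplectic}, this series continues meromorphically into $|u|<q^{-2/3}$ with poles only at the points $u=q^{-4/3}\zeta$, $\zeta\in\mu_3$, and no others there. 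Summing the three residues, whose relative phases depend on $d\bmod 3$, yields the dependence of the main term on $[d+\deg(f_1)]_3$ and on $\rho(1,[d+\deg(f_1)]_3)$; the residue itself, being essentially a value of the constant term of the metaplectic Eisenstein series, supplies the arithmetic factor $\zeta_q(2)^{-1}\prod_{P\mid f_1 f_3^{*}}(1+|P|_q^{-1})^{-1}$, while the conductor aspect of the Whittaker coefficient supplies $\overline{G_q(1,f_1)}\,|f_1|_q^{-2/3}$. The residue vanishes unless $f_2=1$, since for $f_2\neq1$ the extra quadratic twist by $\overline{\chi_F(f_2)}$ turns the relevant series into a genuine (pole-free) $L$-function. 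The $O$-term $\delta_{f_2=1}\,q^{d/3+\varepsilon d}|f_1|_q^{-1/6}$ then collects the contribution of the part of $\tilde\Psi_q(f,\cdot)$ attached to the perfect-cube part $f_3^3$, which is holomorphic in a strictly smaller disk and hence has coefficients of size $O(q^{d/3+\varepsilon d})$ after normalization.

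For the bound on the shifted integral, on $|u|=q^{-\sigma}$ one has $|u|^{-d}=q^{\sigma d}$, so it suffices to establish $|\tilde\Psi_q(f,u)|\ll|f|_q^{\frac12(\frac32-\sigma)}$ uniformly on that circle for $2/3<\sigma<4/3$. At the abscissa $\sigma=3/2^{+}$, just outside absolute convergence, the bound is uniform in $f$ because $|G_q(f,F)|=|F|_q^{1/2}$ does not depend on $f$; at the dual abscissa produced by the functional equation of the Kubota series, the dual series again converges absolutely and carries a conductor factor of size $\sim|f|_q$, giving a bound of the shape $|f|_q^{1/2}$ there. Interpolating these two estimates by the Phragm\'en--Lindel\"of convexity principle yields the bound $|f|_q^{\frac12(\frac32-\sigma)}$ throughout the range, and multiplying by $q^{\sigma d}$ and the bounded length of the contour gives the claim.

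The step I expect to be the main obstacle is making the middle paragraph precise and, crucially, uniform in $f$: pinning down the exact functional equation of $\tilde\Psi_q(f,u)$ in the conductor aspect, complete with the metaplectic Gauss-sum (reciprocity) factors, and verifying that no spurious poles intervene in $2/3<\sigma<4/3$, so that the residue at $u=q^{-4/3}$ really reproduces the stated arithmetic factors --- in particular the exponent $|f_1|_q^{-2/3}$ and the function $\rho$. In the function-field setting these amount to finite identities among rational functions, but the bookkeeping of the cubic cocycle and of the normalizations is delicate; this is precisely the input imported from \cite{kubota,Hoffsteinmetaplectic} in the manner of \cite{DFLCubic}.
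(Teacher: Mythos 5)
The paper does not prove this proposition at all: its ``proof'' is the single line ``This result is \cite[Proposition 3.1]{DFLCubic}.'' So there is nothing internal to compare your argument against; the relevant comparison is with the proof in David--Florea--Lal\'in, which your sketch does track at the level of strategy. Your first paragraph is sound and matches the standard setup: the prime-power evaluation of $G_q(V,P^i)$ does show that $G_q(f,F)=0$ for $(F,f)=1$ with $F$ not squarefree and $|G_q(f,F)|=|F|_q^{1/2}$ otherwise, so $\tilde\Psi_q(f,u)$ converges for $|u|<q^{-3/2}$, Perron applies, and the main term must come from residues crossed when the contour is pushed out to $|u|=q^{-\sigma}$. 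The identification of $\Psi_q(f,u)$ with a Kubota-type Dirichlet series / metaplectic Whittaker coefficient, with poles at $|u|=q^{-4/3}$, is indeed the engine of the cited proof.

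The gap is the one you flag yourself, and it is not a small one: everything quantitative in the statement lives in your middle and final paragraphs, and there you assert rather than derive. Specifically, (i) the exact residue at $u=q^{-4/3}$ --- producing the factors $\overline{G_q(1,f_1)}\,|f_1|_q^{-2/3}$, $\zeta_q(2)^{-1}\prod_{P\mid f_1f_3^*}(1+|P|_q^{-1})^{-1}$, the function $\rho(1,[d+\deg(f_1)]_3)$, and the vanishing unless $f_2=1$ --- requires the precise functional equation of $\tilde\Psi_q(f,u)$ in the conductor aspect, including the cubic-reciprocity and cocycle normalizations; (ii) the secondary error $O(\delta_{f_2=1}q^{d/3+\varepsilon d}|f_1|_q^{-1/6})$ is attributed by you to the cube part $f_3^3$ on guesswork, whereas in \cite{DFLCubic} it arises from correcting the residue computation, so this attribution would need verification; (iii) the bound $|\tilde\Psi_q(f,u)|\ll |f|_q^{\frac12(\frac32-\sigma)}$ on $|u|=q^{-\sigma}$ is a convexity estimate whose endpoints (in particular the conductor size at the dual abscissa) you assume without justification, and uniformity in $f$ is exactly the delicate point. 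As it stands your text is a correct roadmap to the external proof rather than a proof; given that the paper itself only cites \cite[Proposition 3.1]{DFLCubic}, the honest options are either to do the same or to carry out the metaplectic bookkeeping in full, and your proposal currently does neither.
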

\begin{proof}
    This result is \cite[Proposition 3.1]{DFLCubic}.
\end{proof}

\section{Quadratic moments of cubic L-functions}\label{s 3}

\subsection{Setting up the problem}\label{s 3.1}
\par We now assume that \( q \) is odd and satisfies \( q \equiv 2 \bmod{3} \). In particular, the group \( \F_q^\times \) does not contain the third roots of unity, and consequently, every primitive cubic character over \( \F_q[T] \) is even. Recalling Lemma \ref{cubic sum q 2 mod 3}, we obtain the identity
\[
\sum_{\substack{\chi \text{ primitive cubic} \\ \text{genus}(\chi) = g}} L_q(1/2, \chi)^k 
= \sum_{\substack{F \in \mathcal{H}_{q^2,\, g/2 + 1} \\ P \mid F \Rightarrow P \notin \F_q[T]}} L_q(1/2, \chi_F)^k.
\]
\noindent From Proposition \ref{approx funct eqn}, the central value of the \( L \)-function raised to the \( k \)-th can be expressed as follows
\begin{align*}
    L(1/2, \chi)^k &= (1 - q^{-1/2})^k \left( \sum_{i=0}^{A} \binom{k+i-1}{i} \sum_{f \in \cM_{\leq A - i}} \frac{\chi(f) d_k(f)}{\sqrt{|f|}} \right.\\&+ \left.\sum_{i=0}^{kg - A - 1} \binom{k+i-1}{i} \omega(\chi)^k \sum_{f \in \cM_{\leq kg - A - 1 - i}} \frac{\bar{\chi}(f) d_k(f)}{\sqrt{|f|}} \right),
\end{align*}
\noindent where the value of $A$ will be chosen at the end of our analysis. Specializing the above equation to $k=2$, we deduce that
{\small \[\begin{split}L(1/2, \chi)^2=& \left(1-\frac{1}{\sqrt{q}}\right)^2\left(\sum_{i=0}^{A}(i+1)\sum_{f\in \cM_{\leq A-i}}\frac{\chi(f)d(f)}{\sqrt{|f|}}+ \sum_{i=0}^{2g-A-1}(i+1)\omega(\chi)^2\sum_{f\in \cM_{\leq 2g-A-1-i}}  \frac{\bar{\chi}(f) d(f)}{\sqrt{|f|}}\right).
\end{split}\]}
\noindent Thus we may write 
\begin{equation}\label{princ+dual}
    \sum_{\substack{F \in \mathcal{H}_{q^{2}, g / 2+1} \\ P \mid F \Rightarrow P \notin \mathbb{F}_{q}[T]}} L_{q}\left(1 / 2, \chi_{F}\right)^2 \\
  = \left(1-\frac{1}{\sqrt{q}}\right)^2\left(\sum_{i=0}^{A}(i+1)S_{A-i, \rm{prin}}+\sum_{i=0}^{2g-A-1}(i+1)S_{2g-A-1+i, \rm{dual}}\right), 
\end{equation}
 where 
{\small \begin{equation}\label{princpal eqn}S_{t, \rm{prin}}:=\sum_{f\in \cM_{\leq t}}\frac{d(f)}{\sqrt{|f|}}\left(\sum_{\substack{F \in \mathcal{H}_{q^{2}, g / 2+1} \\ P \mid F \Rightarrow P \notin \mathbb{F}_{q}[T]}} \chi_F(f)\right)
\quad\text{and} \quad
S_{t, \rm{dual}}:=\sum_{f\in \cM_{\leq 2g-t-1}}  \frac{d(f)}{\sqrt{|f|}}\left(\sum_{\substack{F \in \mathcal{H}_{q^{2}, g / 2+1} \\ P \mid F \Rightarrow P \notin \mathbb{F}_{q}[T]}} \omega(\chi_F)^2\bar{\chi}_F(f)\right).\end{equation}}
\begin{remark}Following \cite{Florea2ndand3rd}, the terms $S_{t, \rm{prin}}$ (resp. $S_{t, \rm{dual}}$) are called \emph{principal terms} (resp. \emph{dual terms}). One key difference with \emph{loc. cit.} is that in our work there are many principal (respectively, dual) terms, one for each $t = 0, \dots, A$ (respectively, $t = 0, \dots, 2g - A - 1$). This occurs because, in the functional equation for even cubic Dirichlet characters, there is an additional factor $\frac{1 - u}{1 - \frac{1}{q u}}$ appearing in the functional equation \eqref{FE even case}.
\end{remark}
 We decompose $S_{t, \mathrm{prin}} = S_{t, \mathrm{prin}, \tinycube} + S_{t, \mathrm{prin}, \neq \tinycube},
$ where the subscript $\tinycube$ (respectively, $\neq \tinycube$) indicates the sum over all $f$ that are perfect cubes (respectively, not perfect cubes). We prove estimates for the terms $S_{t, \mathrm{prin}, \tinycube}$ and $S_{t, \mathrm{prin}, \neq \tinycube}$ separately.
\subsection{The main term}\label{main tem section}
\par The main term in \eqref{eqn for main theorem} will arise from $S_{t, \rm{prin}, \tinycube}$. Express $t=3t_0+t_1$ where $t_0:=\lfloor t/3\rfloor$ and $t_1$ is $3\{t/3\}$. Writing $f=l^3$, one finds that 
\[S_{t, \rm{prin}, \tinycube}=\sum_{l\in \cM_{\leq t_0}}\frac{d(l^3)}{\sqrt{|l|^3}}\left(\sum_{\substack{F \in \mathcal{H}_{q^{2}, g / 2+1} \\ P \mid F \Rightarrow P \notin \mathbb{F}_{q}[T]}} 1\right).\numberthis\label{S-princ}\] 

\noindent First we estimate the inner sum over $F$, namely: \[\left(\sum_{\substack{F \in \mathcal{H}_{q^{2}, g / 2+1} \\ P \mid F \Rightarrow P \notin \mathbb{F}_{q}[T]}} 1\right).\] \noindent In order to handle the condition "$P \mid F \Rightarrow P \notin \mathbb{F}_q[T]$", we make use of the M\"obius function, specifically the identity
\begin{equation}\label{mobius identity}
\sum_{\substack{D \in \mathbb{F}_q[T] \\ D \mid F}} \mu(D) = 
\begin{cases}
1 & \text{if } F \text{ has no prime divisor in } \mathbb{F}_q[T], \\
0 & \text{otherwise}.
\end{cases}
\end{equation}
\noindent Consider the generating series:
\begin{equation*}
\sum_{\substack{F \in \mathcal{H}_{q^{2}} \\(F, l)=1 \\ P \mid F \Rightarrow P \notin \mathbb{F}_{q}[T]}} z^{\op{deg}(F)}=\sum_{\substack{F \in \mathcal{H}_{q^{2}} \\(F, l)=1}} z^{\op{deg}(F)} \sum_{\substack{D \in \mathbb{F}_{q}[T] \\ D \mid F}} \mu(D)=\sum_{\substack{D \in \mathbb{F}_{q}[T] \\(D, l)=1}} \mu(D) z^{\op{deg}(D)} \sum_{\substack{F \in \mathcal{H}_{q^{2}} \\(F, D l)=1}} z^{\op{deg}(F)} . 
\end{equation*}
\noindent We find that 
\[\sum_{\substack{F \in \mathcal{H}_{q^{2}} \\(F, l D)=1}} z^{\op{deg}(F)}=\prod_{\substack{P \in \mathbb{F}_{q^{2}}[T] \\ P \nmid D l}}\left(1+z^{\op{deg}(P)}\right)=\frac{\mathcal{Z}_{q^{2}}(z)}{\mathcal{Z}_{q^{2}}\left(z^{2}\right) \prod_{\substack{P \in \mathbb{F}_{q^{2}}[T] \\ P \mid D l}}\left(1+z^{\op{deg}(P)}\right)}\]
\noindent It follows that
\begin{equation*}
\sum_{\substack{F \in \mathcal{H}_{q^{2}} \\(F, l)=1 \\ P \mid F \Rightarrow P \notin \mathbb{F}_{q}[T]}} z^{\op{deg}(F)}=\frac{\mathcal{Z}_{q^{2}}(z)}{\mathcal{Z}_{q^{2}}\left(z^{2}\right) \prod_{\substack{P \in \mathbb{F}_{q^{2}}[T] \\ P \mid  l}}\left(1+z^{\operatorname{deg}(P)}\right)}\sum_{\substack{D \in \mathbb{F}_{q}[T] \\(D, l)=1}} \frac{\mu(D) z^{\op{deg}(D)}}{\prod_{\substack{P \in \mathbb{F}_{q^{2}}[T] \\ P \mid  D}}\left(1+z^{\op{deg}(P)}\right)}  . 
\end{equation*}
\noindent Consider the inner sum  of the above generating series, we have
\[\begin{split}
   \sum_{\substack{D \in \mathbb{F}_{q}[T] \\(D, l)=1}} \frac{\mu(D) z^{\op{deg}(D)}}{\prod_{\substack{P \in \mathbb{F}_{q^{2}}[T] \\ P \mid  D}}\left(1+z^{\operatorname{deg}(P)}\right)}&=\prod_{\substack{R\in \mathbb{F}_{q}[T] \\(R, l)=1 \\ \op{deg}(R)\ \text{odd}}}\left(1-\frac{z^{\deg(R)}}{1+z^{\deg(R)}}\right)\prod_{\substack{R\in \mathbb{F}_{q}[T] \\(R, l)=1 \\ \deg(R)\ \text{even}}}\left(1-\frac{z^{\deg(R)}}{(1+z^{\deg(R)/2})^2}\right)\\
   &=\prod_{\substack{R\in \mathbb{F}_{q}[T] \\(R, l)=1 \\ \deg(R)\ \text{odd}}}\left(\frac{1}{1+z^{\deg(R)}}\right)\prod_{\substack{R\in \mathbb{F}_{q}[T] \\(R, l)=1 \\ \deg(R)\ \text{even}}}\left(\frac{1+2z^{\deg(R)/2}}{(1+z^{\deg(R)/2})^2}\right)\\
&=\prod_{\substack{R\in \mathbb{F}_{q}[T] \\(R, l)=1 \\ \deg(R)\ \text{odd}}}A_R(z)\prod_{\substack{R\in \mathbb{F}_{q}[T] \\(R, l)=1 \\ \deg(R)\ \text{even}}}B_R(z)
 \end{split}\]
where  $R$ is a monic, irreducible polynomial, $A_R(z)=\frac{1}{1+z^{\deg(R)}}$, and $B_R(z)=\frac{1+2z^{\deg(R)/2}}{((1+z^{\deg(R)/2})^2}$
\noindent Putting everything together, we get
\begin{equation}\label{maineq 1}\sum_{\substack{F \in \mathcal{H}_{q^{2}} \\(F, l)=1 \\ P \mid F \Rightarrow P \notin \mathbb{F}_{q}[T]}} z^{\op{deg}(F)}=\frac{\mathcal{Z}_{q^{2}}(z)}{\mathcal{Z}_{q^{2}}\left(z^{2}\right) \prod_{\substack{P \in \mathbb{F}_{q^{2}}[T] \\ P \mid  l}}\left(1+z^{\operatorname{deg}(P)}\right)}\frac{\prod_{\substack{R\in \mathbb{F}_{q}[T]  \\ \deg(R)\ \text{odd}}}A_R(z)\prod_{\substack{R\in \mathbb{F}_{q}[T]\\ \op{deg}(R)\ \text{even}}}B_R(z)}{\prod_{\substack{R\in \mathbb{F}_{q}[T] \\R \mid l\\ \op{deg}(R)\ \text{odd}}}A_R(z)\prod_{\substack{R\in \mathbb{F}_{q}[T] \\ R \mid l \\ \op{deg}(R)\ \text{even}}}B_R(z)}.\end{equation}

\par Invoking \eqref{maineq 1}, we deduce that:
\begin{equation}\label{main eq 3}
    \begin{split} \sum_{l \in \mathcal{M}_q} d(l^3) u^{\deg(l)} \sum_{\substack{F \in \mathcal{H}_{q^{2}} \\(F, l)=1 \\ P \mid F \Rightarrow P \notin \mathbb{F}_{q}[T]}} z^{\deg(F)}&= \frac{\mathcal{Z}_{q^{2}}(z)\prod_{\substack{R\in \mathbb{F}_{q}[T]  \\ \op{deg}(R)\ \text{odd}}}A_R(z)\prod_{\substack{R\in \mathbb{F}_{q}[T]\\ \op{deg}(R)\ \text{even}}}B_R(z)}{\mathcal{Z}_{q^{2}}\left(z^{2}\right)}  \\\times&
    \sum_{l \in \mathcal{M}_q}  \frac{d(l^3) u^{\deg(l)}}{\prod_{\substack{P \in \mathbb{F}_{q^{2}}[T] \\ P \mid  l}}\left(1+z^{\operatorname{deg}(P)}\right)\prod_{\substack{R\in \mathbb{F}_{q}[T] \\R \mid l\\ \op{deg}(R)\ \text{odd}}}A_R(z)\prod_{\substack{R\in \mathbb{F}_{q}[T] \\ R \mid l \\ \op{deg}(R)\ \text{even}}}B_R(z)},
    \end{split}
\end{equation}
\noindent and we have the following relation
\begin{equation}\label{main eq 2}
\begin{split}
& \sum_{l \in \mathcal{M}_q}  \frac{ d(l^3) u^{\deg(l)} }{ \displaystyle \prod_{\substack{P \in \mathbb{F}_{q^2}[T] \\ P\mid l}} (1+z^{\deg(P)}) \displaystyle \prod_{\substack{R \in \mathbb{F}_q[T] \\ R\mid l \\ \deg(R)  \,\mathrm{ odd}}} A_R(z)  \prod_{\substack{R \in \mathbb{F}_q[T] \\ R\mid l \\ \deg(R)  \,\mathrm{ even}}} B_R(z) } \\
=&\prod_{\substack{R \in \mathbb{F}_q[T] \\ \deg(R)  \,\mathrm{ odd}}} \left[ 1+ \frac{\sum_{j=1}^\infty d(R^{3j})u^{j\deg(R)}}{(1+z^{\deg(R)}) A_R(z) }\right] \prod_{\substack{R \in \mathbb{F}_q[T] \\ \deg(R)  \,\mathrm{ even}}} \left[1+ \frac{\sum_{j=1}^\infty d(R^{3j})u^{j\deg(R)}}{(1+z^{\frac{\deg(R)}{2}})^2 B_R(z)} \right]\\
&=\prod_{\substack{R \in \mathbb{F}_q[T] \\ \deg(R)  \,\mathrm{ odd}}} \left[ 1+ {\sum_{j=1}^\infty (3j+1)u^{j\deg(R)}}\right] \prod_{\substack{R \in \mathbb{F}_q[T] \\ \deg(R)  \,\mathrm{ even}}} \left[1+ \frac{\sum_{j=1}^\infty (3j+1)u^{j\deg(R)}}{(1+2z^{\frac{\deg(R)}{2}})} \right]\\
&= \prod_{\substack{R \in \mathbb{F}_q[T] \\ \deg(R)  \,\mathrm{ odd}}} \left[ 1+ \frac{ u^{\deg(R)}(4-u^{\deg(R)})}{ (1-u^{\deg(R)})^2}\right] \prod_{\substack{R \in \mathbb{F}_q[T] \\ \deg(R)  \,\mathrm{ even}}} \left[1+ \frac{u^{\deg(R)} (4-u^{\deg(R)})}{(1+2z^{\frac{\deg(R)}{2}})(1-u^{\deg(R)})^2} \right],
\end{split}
\end{equation}
where $R$ denotes a monic irreducible polynomial in $\mathbb{F}_q[T]$. 

\par We set 
\[\begin{split}
& \mathcal{B}_1(u, z):=\sum_{l \in \mathcal{M}_q} \frac{d(l^3)}{\sqrt{|\ell|}^3} u^{\deg(l)} \sum_{\substack{F \in \mathcal{H}_{q^{2}} \\(F, l)=1 \\ P \mid F \Rightarrow P \notin \mathbb{F}_{q}[T]}} z^{\deg(F)}\\
& \mathcal{B}_2(u, z):=\sum_{l \in \mathcal{M}_q} d(l^3) u^{\deg(l)} \sum_{\substack{F \in \mathcal{H}_{q^{2}} \\(F, l)=1 \\ P \mid F \Rightarrow P \notin \mathbb{F}_{q}[T]}} z^{\deg(F)},\\ \end{split}\]
and note that 
\[\mathcal{B}_1(q^{3/2}u, z)=\mathcal{B}_2(u, z).\]
Combining \eqref{main eq 3} and \eqref{main eq 2}, one has that
\begin{equation}\label{main eq 4}
\begin{split}
 \mathcal{B}_2(u,z) &=\frac{\mathcal{Z}_{q^{2}}(z)}{\mathcal{Z}_{q^{2}}(z^2)} \prod_{\substack{R \in \mathbb{F}_q[T] \\ \deg(R)  \,\mathrm{ odd}}} \left[  \frac{ (1+2u^{\deg(R)})}{(1+z^{\deg(R)}) (1-u^{\deg(R)})^2}\right] \\& \times \prod_{\substack{R \in \mathbb{F}_q[T] \\ \deg(R)  \,\mathrm{ even}}} \frac{1}{(1+z^{\frac{\deg(R)}{2}})^2}\left[1+2z^{\frac{\deg(R)}{2}}+\frac{ u^{\deg(R)}(4-u^{\deg(R)})}{(1-u^{\deg(R)})^2} \right]\\
  & = \mathcal{Z}_{q}^4{(u)}\frac{\mathcal{Z}_{q^{2}}(z)}{\mathcal{Z}_{q^{2}}(z^2)} A_q(z,u),
\end{split}
\end{equation}
where \begin{equation}\label{A_Qdefn}
  \begin{split}
A_q(z,u)&=\prod_{\substack{R \in \mathbb{F}_q[T] \\ \deg(R)  \,\mathrm{ odd}}} \left[  \frac{ (1-3u^{2\deg(R)}+2u^{3\deg(R)})}{(1+z^{\deg(R)}) }\right] \\ &\times \prod_{\substack{R \in \mathbb{F}_q[T] \\ \deg(R)  \,\mathrm{ even}}} \frac{1}{(1+z^{\frac{\deg(R)}{2}})^2}\left[2z^{\frac{\deg(R)}{2}}(1-u^{\deg(R)})^4+ 1-3u^{2\deg(R)}+2u^{3\deg(R)}\right].
\end{split}  
\end{equation}
\noindent Apply Perron's formula (Theorem \ref{Perron's thm}) twice to the generating series given above and get \eqref{S-princ} as follows:
\[S_{t, \rm{prin}, \tinycube}= \frac{1}{(2\pi i)^2}\int_{|u|= 1/q^{\varepsilon}}\int_{|z|= 1/q^{2+\varepsilon}} \frac{\mathcal{B}_1(u,z)}{z^{g/2+1}(1-u)u^{t_0}} \frac{du}{u}\frac{dz}{z}.\] Substituting $q^{3/2} u$ in place of $u$, we get:

\begin{equation}\label{S_t prin integral formula}\begin{split}S_{t, \rm{prin}, \tinycube}= & \frac{1}{(2\pi i)^2}\int_{|u|= 1/q^{3/2+\varepsilon}}\int_{|z|= 1/q^{2+\varepsilon}} \frac{\mathcal{B}_2(u,z)}{z^{g/2+1}(1-q^{3/2} u)(q^{3/2}u)^{t_0}} \frac{du}{u}\frac{dz}{z} \\ 
=&\frac{1}{(2\pi i)^2}\int_{|u|= 1/q^{3/2+\varepsilon}}\int_{|z|=1/q^{2+\varepsilon}}\frac{A_q(z,u)(1-q^2z^2)}{(1-qu)^4(1-q^2z)(1-q^{3/2}u)z^{g/2+1}(q^{3/2}u)^{t_0}}\frac{d z}{z}\frac{d u}{u}.
\end{split}\end{equation}

\begin{lemma}
   Let $A_q(z,u)$ be the function defined in \eqref{A_Qdefn}. Then, for any fixed value of $u$ with $|u|< \frac{1}{q^{1/2}}$, $A_{q}(z,u)$ is analytic for $|z|<\frac{1}{q}$.
\end{lemma}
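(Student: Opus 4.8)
The plan is to realise $A_q(z,u)$, for a fixed $u$, as a locally uniformly convergent infinite product of rational functions of $z$ whose poles all lie on the circle $|z|=1$, and then to deduce analyticity on $|z|<1/q$ from the standard theorem on infinite products of holomorphic functions. The algebraic input that makes this work is the identity
\[
1-3u^{2d}+2u^{3d}=(1-u^d)^2(1+2u^d),\qquad d\geq 1,
\]
which exhibits each Euler factor of $A_q$ in the shape ``$1+{}$small'' and, more importantly, shows that the deviation from $1$ produced by the variable $u$ is of size $|u|^{2d}$ rather than $|u|^d$; this gain of a factor $|u|^d$ is exactly what the hypothesis $|u|<q^{-1/2}$ is calibrated to absorb.

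First I would fix $u$ with $|u|<q^{-1/2}$ and write $A_q(z,u)=\prod_R g_R(z)$, the product taken over monic irreducibles $R$ of $\mathbb{F}_q[T]$, where $d:=\deg R$ and
\[
g_R(z)=
\begin{cases}
\dfrac{(1-u^d)^2(1+2u^d)}{1+z^d}, & d\ \text{odd},\\[1ex]
\dfrac{2z^{d/2}(1-u^d)^4+(1-u^d)^2(1+2u^d)}{(1+z^{d/2})^2}, & d\ \text{even}.
\end{cases}
\]
In the even case $d/2\in\mathbb{Z}$, so $z^{d/2}$ is a polynomial in $z$; hence each $g_R$ is a rational function of $z$ whose denominator vanishes only on $|z|=1$, and therefore every $g_R$ and every finite partial product is holomorphic on $\{|z|<1\}\supset\{|z|<1/q\}$.

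Next I would estimate $g_R(z)-1$ on a compact set $K\subset\{|z|<1/q\}$, choosing $r<1/q$ with $|z|\leq r$ on $K$. In the odd case $g_R(z)-1=(1+z^d)^{-1}\bigl(-3u^{2d}+2u^{3d}-z^d\bigr)$, and since $|1+z^d|\geq 1-r$ this gives $|g_R(z)-1|\ll |u|^{2d}+|z|^d$ with an absolute implied constant. In the even case, writing $v=u^d$ and $w=z^{d/2}$,
\[
g_R(z)-1=\frac{2w\bigl((1-v)^4-1\bigr)+\bigl((1-v)^2(1+2v)-1\bigr)-w^2}{(1+w)^2};
\]
using $(1-v)^4-1=O(v)$, $(1-v)^2(1+2v)-1=-3v^2+2v^3=O(v^2)$ and $|1+w|\geq 1-r^{1/2}$, this yields $|g_R(z)-1|\ll |u|^{2d}+|z|^d+|u|^d|z|^{d/2}$, again with an absolute implied constant. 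Since $\#\{R:\deg R=d\}\leq q^d$, the series $\sum_R|g_R(z)-1|$ is dominated on $K$, up to an absolute constant, by
\[
\sum_{d\geq 1}\Bigl((q|u|^2)^d+(qr)^d+(q|u|r^{1/2})^d\Bigr),
\]
and each of the three geometric series converges: $q|u|^2<1$ because $|u|<q^{-1/2}$, $qr<1$ because $r<1/q$, and $q|u|r^{1/2}<q\cdot q^{-1/2}\cdot q^{-1/2}=1$.

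Consequently $\sum_R|g_R-1|$ converges uniformly on compact subsets of $\{|z|<1/q\}$, and the standard theorem on infinite products of holomorphic functions shows that $\prod_R g_R=A_q(z,u)$ converges locally uniformly on this disc to a holomorphic function, proving the lemma. I expect the only delicate point to be the even-degree bookkeeping: one must observe that the coefficient of the linear term $w=z^{d/2}$ in the numerator of $g_R$, namely $2\bigl((1-v)^4-1\bigr)$, vanishes at $v=0$, so that the potentially dangerous $|z|^{d/2}$ contribution actually appears weighted by $|u|^d$ and the resulting series $\sum_d(q|u|r^{1/2})^d$ converges exactly under the hypotheses $|u|<q^{-1/2}$ and $|z|<1/q$.
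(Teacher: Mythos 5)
Your proof is correct and takes essentially the same route as the paper's: both arguments reduce the claim to the locally uniform convergence of $\sum_{R}\lvert g_R(z)-1\rvert$ over monic irreducibles, with the decisive point in each case being that the potentially dangerous $z^{\deg(R)/2}$ contribution from the even-degree factors appears weighted by $u^{\deg(R)}$ (equivalently, cancels to order $z^{\deg(R)}$), so that the hypotheses $|u|<q^{-1/2}$ and $|z|<q^{-1}$ exactly suffice via $\#\{R:\deg R=d\}\le q^{d}$. The only cosmetic difference is that the paper first factors out the zeta functions $\mathcal{Z}_q(z^2)$ and $\mathcal{Z}_q(z)^2$ and then checks convergence of the residual product, whereas you bound the Euler factors directly.
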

\begin{proof}
Write $A_q(z,u)=A_{q,1}(z,u) A_{q,2}(z,u)$, where:
\[A_{q,1}(z,u):=\prod_{\substack{R \in \mathbb{F}_q[T] \\ \deg(R)  \,\mathrm{ odd}}} \left[  \frac{ (1-3u^{2\deg(R)}+2u^{3\deg(R)})}{(1+z^{\deg(R)}) }\right]\]
and
\[A_{q,2}(z,u):=\prod_{\substack{R \in \mathbb{F}_q[T] \\ \deg(R)  \,\mathrm{ even}}} \frac{1}{(1+z^{\frac{\deg(R)}{2}})^2}\left[2z^{\frac{\deg(R)}{2}}(1-u^{\deg(R)})^4+ 1-3u^{2\deg(R)}+2u^{3\deg(R)}\right].
\]
We study the convergence of both terms separately, beginning with $A_{q,1}(z,u)$. We write 
 \[
     \begin{split}
      A_{q,1}(z,u)&=\prod_{\substack{R \in \mathbb{F}_q[T] \\ \deg(R)  \,\mathrm{ odd}}} \left[  \frac{ (1-3u^{2\deg(R)}+2u^{3\deg(R)}))}{(1+z^{\deg(R)}) }\right]\\&=\mathcal{Z}_{q}{(z^2)}\prod_{\substack{R \in \mathbb{F}_q[T] \\ \deg(R)  \,\mathrm{ odd}}}\left((1-3u^{2\deg(R)}+2u^{3\deg(R)})(1-z^{\deg(R)})\right)\times \prod_{\substack{R \in \mathbb{F}_q[T] \\ \deg(R)  \,\mathrm{ even}}}(1-z^{2\deg(R)})
  \end{split}
 \]
We see that the product above converges to a holomorphic function since the series \[\sum_{R \in \mathbb{F}_q[T]}(-3u^{2\deg(R)}+2u^{3\deg(R)}-z^{\deg(R)}+3u^{2\deg(R)}z^{\deg(R)}-2u^{3\deg(R)}z^{\deg(R)})\] converges uniformly for $|z|<{q^{-1}}$ and $|u|<{q^{-1/2}}$. Thus, $A_{q,1}(z,u)$ is analytic for the prescribed region.

\par Next, we consider the product $A_{q,2}(z,u)$. We have the following simplifications:
{\small\[\begin{split}
& A_{q,2}(z,u)\\&=   \prod_{\substack{R \in \mathbb{F}_q[T] \\ \deg(R)  \,\mathrm{ even}}} \frac{1}{(1+z^{\frac{\deg(R)}{2}})^2}\left[2z^{\frac{\deg(R)}{2}}(1-u^{\deg(R)})^4+ 1-3u^{2\deg(R)}+2u^{3\deg(R)}\right]\\
 &=\mathcal{Z}_{q}^{2}{(z)}\prod_{\substack{R \in \mathbb{F}_q[T] \\ \deg(R)  \,\mathrm{ odd}}}{(1-z^{{\deg(R)}})^2} \prod_{\substack{R \in \mathbb{F}_q[T] \\ \deg(R)  \,\mathrm{ even}}} (1+2z^{\deg(R)/2}(1-u^{\deg(R)})^4-3u^{2\deg(R)}+2u^{3\deg(R)})(1-z^{\deg(R)/2})^2.
\end{split}\]}
The product over odd irreducible polynomials is analytic for  $|z|<\frac{1}{q}$. To check the region of convergence for  the product over even irreducible polynomials, we consider the corresponding sum series
{\small\[\begin{split}
   &\sum_{{\substack{R \in \mathbb{F}_q[T] \\ \deg(R)  \,\mathrm{ even}}}}[(1+2z^{\deg(R)/2}(1-u^{\deg(R)})^4-3u^{2\deg(R)}+2u^{3\deg(R)})(1-z^{\deg(R)/2})^2-1]\\ 
   & =\sum_{{\substack{R \in \mathbb{F}_q[T] \\ \deg(R)  \,\mathrm{ even}}}}\left\{(1+2z^{\deg(R)/2})(1-2z^{\deg(R)/2})-1\right\} +O\left(\sum_{{\substack{R \in \mathbb{F}_q[T] \\ \deg(R)  \,\mathrm{ even}}}}\left(z^{\op{deg}(R)}+z^{\op{deg}(R)/2}u ^{\op{deg}(R)}+u^{2\op{deg}(R)}\right)\right)\\
   &=4\sum_{{\substack{R \in \mathbb{F}_q[T] \\ \deg(R)  \,\mathrm{ even}}}}z^{\deg(R)}+O\left(\sum_{{\substack{R \in \mathbb{F}_q[T] \\ \deg(R)  \,\mathrm{ even}}}}\left(z^{\op{deg}(R)}+z^{\op{deg}(R)/2}u ^{\op{deg}(R)}+u^{2\op{deg}(R)}\right)\right).
\end{split}\]}

Therefore, $A_{q,2}(z,u)$ is analytic for $|z|<\frac{1}{q}$ and $|u|<\frac{1}{q^{1/2}}$. 
This proves the lemma.
\end{proof}
Shifting the integral \eqref{S_t prin integral formula} from $|z|=1/q^{2+\varepsilon}$ to $|z|=1/q^{1+\varepsilon}$, we find that there is a pole at $z=1/q^2$. Thus, by Cauchy's residue theorem, we deduce that: 
\begin{align*}
   S_{t, \rm{prin}, \tinycube}=&\frac{q^{g+2}}{\zeta_{q}(3)}\frac{1}{2\pi i}\int_{|u|= 1/q^{3/2+\varepsilon}} \frac{A_q(1/q^2,u)}{(1-qu)^4(1-q^{3/2}u)(q^{3/2}u)^{t_0}}\frac{d u}{u}\\&+O\left(\int_{|u|= 1/q^{3/2+\varepsilon}}\int_{|z|= 1/q^{1+\varepsilon}}\frac{A_q(z,u)(1-q^2z^2)}{(1-qu)^4(1-q^2z)(1-q^{3/2}u)z^{g/2+1}(q^{3/2}u)^{t_0}}\frac{d z}{z}\frac{d u}{u}\right)\\
   &=\frac{q^{g+2}}{\zeta_{q}(3)}\frac{1}{2\pi i}\int_{|u|= 1/q^{3/2+\varepsilon}} \frac{A_q(1/q^2,u)}{(1-qu)^4(1-q^{3/2}u)(q^{3/2}u)^{t_0}}\frac{d u}{u}+O(q^{g/2+\varepsilon g}),
\end{align*}
where the error term is estimated as follows:
\begin{equation}\label{before eqn}
\begin{split}
   & \left|\int_{|u|= 1/q^{3/2+\varepsilon}}\int_{|z|= 1/q^{1+\varepsilon}}\frac{A_q(z,u)(1-q^2z^2)}{(1-qu)^4(1-q^2z)(1-q^{3/2}u)z^{g/2+1}(q^{3/2}u)^{t_0}}\frac{d z}{z}\frac{d u}{u}\right|\\&\leq \int_{|u|= 1/q^{3/2+\varepsilon}}\int_{|z|= 1/q^{1+\varepsilon}}\frac{|A_q(z,u)||(1-q^2z^2)|}{|(1-qu)^4(1-q^2z)(1-q^{3/2}u)z^{g/2+1}(q^{3/2}u)^{t_0}|}\frac{d z}{|z|}\frac{d u}{|u|}\\&
   \ll q^{g/2+1+\varepsilon g }.
\end{split}
\end{equation}
\noindent Shift the integral from $|u| = q^{-3/2 - \varepsilon}$ to $|u| = q^{1/\varepsilon}$. There are two poles in the open annulus $q^{-3/2 - \varepsilon} < |u| < q^{1/\varepsilon}$: one at $u = q^{-3/2}$ of order 1, and the other at $u = q^{-1}$ of order 4. Thus, by the Cauchy residue theorem, we have

\begin{equation}\label{prin-cube}
    \begin{split}
   S_{t, \rm{prin}, \tinycube}&=\frac{q^{g+2}}{\zeta_{q}(3)}\zeta_{q}(3/2)^2A_q(1/q^2,1/q^{3/2})+\frac{q^{g-t_0/2-2}}{\zeta_{q}(3)}P_2(q)+\frac{q^{g-t_0/2-2}}{\zeta_{q}(3)}Q_3(q^{3/2})+O(q^{g-3t_0/2+\varepsilon t_0}), 
\end{split}
\end{equation}
where $P_2(x)$ and $Q_3(x)$ are degree $2$ and degree $3$ polynomials respectively which can be explicitly determined, and the error term is estimated as in \eqref{before eqn}.
\subsection{Contribution from non-cubes}\label{noncube section}
In this section, we estimate the contribution of $S_{t, \mathrm{prin}, \neq \tinycube}$ and show that it is bounded by the error term in \eqref{eqn for main theorem}. Recall from \eqref{princpal eqn} that
\[ S_{t, \rm{prin}, \neq \tinycube}=\sum_{\substack{f\in \cM_{\leq t}\\f\neq \tinycube}}\frac{d(f)}{\sqrt{|f|}}\left(\sum_{\substack{F \in \mathcal{H}_{q^{2}, g / 2+1} \\ P \mid F \Rightarrow P \notin \mathbb{F}_{q}[T]}} \chi_F(f)\right).\]
\noindent From \eqref{mobius identity}, we arrive at the following identity:
\begin{equation*}
S_{t, \rm{prin}, \neq \tinycube}=  \sum_{\substack{f \in \mathcal{M}_{q, \leq t} \\ f \neq \tinycube}} \frac{d(f)}{q^{\deg(f)/2}}  \sum_{\substack{F \in \mathcal{H}_{q^2,\frac{g}{2}+1-\deg(D)} \\ (F,f)=1}} \chi_F(f)\sum_{\substack{D \in \mathbb{F}_q[T] \\ D\mid F}} \mu(D).
\end{equation*}
Replacing $F$ with $F D$ and using the fact that $\chi_D(f) = 1$ (cf. Lemma \ref{chi_D(f)=1 lemma}), we have
\begin{equation}
S_{t, \rm{prin}, \neq \tinycube}=  \sum_{\substack{f \in \mathcal{M}_{q, \leq t} \\ f \neq \tinycube}} \frac{d(f)}{q^{\deg(f)/2}} \sum_{\substack{D \in \mathcal{M}_{q, \leq \frac{g}{2}+1} \\ (D,f)=1}} \mu(D) \sum_{\substack{F \in \mathcal{H}_{q^2,\frac{g}{2}+1-\deg(D)} \\ (F,f)=1}} \chi_F(f).
\label{non_cube_term}
\end{equation}
\noindent Consider the generating series for the sum over $F$: 
\begin{align*}
 \sum_{\substack{F \in \mathcal{H}_{q^2} \\ (F,f)=1}} \chi_F(f) u^{\deg(F)} = \prod_{\substack{P \in \mathbb{F}_{q^2}[T] \\ P \nmid f}} \left ( 1+\chi_P (f) u^{\deg(P)} \right ) = \frac{ \mathcal{L}_{q^2} \left(u, \chi_f\right) }{\mathcal{L}_{q^2}(u^2, \overline{\chi_f})} \prod_{\substack{P \in \mathbb{F}_{q^2}[T]\\ P \mid f}} \left(\frac{  1- \chi_P(f) u^{\deg(P)} }{   1- \overline{\chi_P}(f) u^{2 \deg(P)} }\right).
\end{align*}
\noindent Applying Perron's formula to the generating series above, we have
\begin{equation}\label{boring equation 1}
\begin{split}
 \sum_{\substack{F \in \mathcal{H}_{q^2,\frac{g}{2}+1-\deg(D)} \\ (F,D)=1}} \chi_F(f) = \frac{1}{2 \pi i} \int_{|u|=1/q^{1+\varepsilon}} \frac{ \mathcal{L}_{q^2} \left(u,\chi_f\right) }{\mathcal{L}_{q^2}(u^2, \overline{\chi_f}) u^{\frac{g}{2}+1-\deg(D)}} \prod_{\substack{P \in \mathbb{F}_{q^2}[T] \\ P \mid f}} \left(\frac{  1- \chi_P(f) u^{\deg(P)} }{   1- \overline{\chi_P}(f) u^{2 \deg(P)} } \right)\, \frac{du}{u}.
\end{split}
\end{equation}
By the well known Lindel\"of bound for L-functions (cf. \cite[Lemma 2.5]{DFLCubic}), one has that 
\begin{equation}\label{lindelof bound}
    \left|  \mathcal{L}_{q^2} \left(u,\chi_f\right) \right| \ll q^{2 \varepsilon \deg(f)}.
\end{equation}
We shall also use the following lower bound
\begin{equation}\label{L function lower bound}  \left| \mathcal{L}_{q^2} (u^2, \overline{\chi_f})  \right| \gg q^{-2 \varepsilon\deg(f)},\end{equation}
cf. \cite[Lemma 2.6]{DFLCubic}. Applying \eqref{lindelof bound} (respectively \eqref{L function lower bound}) to the numerator (resp. denominator) of \eqref{boring equation 1}, it follows that
$$  \sum_{\substack{F \in \mathcal{H}_{q^2,\frac{g}{2}+1-\deg(D)} \\ (F,D)=1}} \chi_F(f) \ll q^{\frac{g}{2}-\deg(D)} q^{4 \varepsilon \deg(f)+2 \varepsilon \deg(D)}.$$
\noindent A trivial estimation of the sums over $D$ in \eqref{non_cube_term} yields the upper bound
\begin{equation*}
\sum_{\substack{D \in \mathcal{M}_{q, \leq \frac{g}{2}+1} \\ (D,f)=1}} \mu(D) \sum_{\substack{F \in \mathcal{H}_{q^2,\frac{g}{2}+1-\deg(D)} \\ (F,Df)=1}} \chi_F(f)\ll q^{\frac{g}{2}+\varepsilon g+4 \varepsilon t}.
\end{equation*}
\noindent The generating series for the sum over $f$ is given as
\[\sum_{{f \in \mathcal{M}_{q} }}{d(f)}u^{\deg(f)}=\prod_{P\in \mathbb{F}_{q}[T]}\left(1+\sum_{j=1}^{\infty}d(P^{j})u^{\deg(P^j)}\right)=\prod_{P\in \mathbb{F}_{q}[T]}\left(1+\sum_{j=1}^{\infty}(j+1)u^{\deg(P^j)}\right)= \mathcal{Z}_{q}^2{(u)}.\]
\noindent We write 
\[G(u)=\sum_{{f \in \mathcal{M}_{q} }}\frac{d(f)}{q^{\op{deg}(f)/2}}u^{\deg(f)}\]
and set $u:=q^{1/2} w$. Then, we have that 
\[\tilde{G}(w)=\sum_{{f \in \mathcal{M}_{q} }}d(f)w^{\deg(f)}=\mathcal{Z}_q^2(w).\]
\[\tilde{G}(u/q^{1/2})=G(u)\]
\noindent Applying Perron's formula to $G(u)$, we get 
\[ \sum_{\substack{f \in \mathcal{M}_{q, \leq t} \\ f \neq \tinycube}} \frac{d(f)}{q^{\deg(f)/2}}=  \frac{1}{2 \pi i} \int_{|u|=1/q^{\frac{1}{2}+\varepsilon}}\frac{G(u)}{(1-u)u^{t}}\frac{du}{u} .\]
Replace $u$ with $q^{1/2} u$ to get that the above equals 
\[\sum_{\substack{f \in \mathcal{M}_{q, \leq t} \\ f \neq \tinycube}} \frac{d(f)}{q^{\deg(f)/2}}=  \frac{1}{2 \pi i} \int_{|u|=1/q^{1+\varepsilon}}\frac{\widetilde{G}(q^{1/2}u)}{(1-q^{1/2}u)(q^{1/2}u)^{t}}\frac{du}{u}.\]
Note that $\widetilde{G}(q^{1/2}u)=\mathcal{Z}_q^2(u)=\frac{1}{(1-qu)^2}$. Thus, we get that:
\[ \sum_{\substack{f \in \mathcal{M}_{q, \leq t} \\ f \neq \tinycube}} \frac{d(f)}{q^{\deg(f)/2}}=  \frac{1}{2 \pi i} \int_{|u|=1/q^{1+\varepsilon}}\frac{1}{(1-qu)^2(1-q^{1/2}u)(q^{1/2}u)^{t}}\frac{du}{u} .\]
We thus have the following estimate: 
\begin{equation}\label{prin-non cube}
    \sum_{\substack{f \in \mathcal{M}_{q, \leq t} \\ f \neq \tinycube}} \frac{d(f)}{q^{\deg(f)/2}} \ll q^{t/2+t\varepsilon}.
\end{equation}
Combining all the upper bounds and substituting in \eqref{non_cube_term}, we get the final bound
\begin{equation}\label{final prin-non cube}
    S_{t, \rm{prin}, \neq \tinycube} \ll q^{\frac{g+t}{2}+\varepsilon g+t\varepsilon}.
\end{equation}
Combining the perfect cube \eqref{prin-cube} and non perfect cube \eqref{final prin-non cube} estimates, we arrive at the following bound for the principal sum:
\begin{equation}\label{princ final}
     \begin{split}
   S_{t, \rm{prin} }&=\frac{q^{g+2}}{\zeta_{q}(3)}\zeta_{q}(3/2)^2A_q(1/q^2,1/q^{3/2})+\frac{q^{g-t/6-2}}{\zeta_{q}(3)}P_2(q)+\frac{q^{g-t/6-2}}{\zeta_{q}(3)}Q_3(q^{3/2})+O(q^{g-t/2+\varepsilon t}+q^{\frac{g+t}{2}+\varepsilon g+t\varepsilon}), 
\end{split}
\end{equation}
\subsection{The dual term}\label{dual term section}
Recall the expression of the dual term
\begin{equation}\label{section 3.4 dual term}S_{t, \rm{dual}}=\sum_{f\in \cM_{\leq 2g-t-1}}  \frac{d(f)}{\sqrt{|f|}}\left(\sum_{\substack{F \in \mathcal{H}_{q^{2}, g / 2+1} \\ P \mid F \Rightarrow P \notin \mathbb{F}_{q}[T]}} \omega(\chi_F)^2\bar{\chi}_F(f)\right).\end{equation}
Note that the restriction of $\chi_F$ to $\mathbb{F}_{q}[T]^\times$ is an even character of modulus $F\tilde{F}$, where $\tilde{F}$ is the Galois conjugate of $F$. From \eqref{omega function}, we find that $\omega(\chi_{F})^{2}=q^{-g-2}G^{2}(\chi_{F})$, where
\[
G(\chi_{F}) = \sum_{a \in \mathbb{F}_{q}[T]/(F\tilde{F})} \chi_{F}(a) e_q \left(\frac{a}{F\tilde{F}} \right).
\] 
By the Chinese remainder theorem, there is a natural bijection
$
\beta \mapsto \beta \tilde{F} + \tilde{\beta} F
$ between the residue classes modulo $F$ in $\F_{q^2}[T]$ and the residue classes modulo $F\tilde{F}$ in $\F_q[T]$. Therefore, we can compute the Gauss sum $G(\chi_F)$ as follows:
$$
\begin{aligned}
G(\chi_F) &= \sum_{\beta \in \F_{q^2}[T]/(F)} \chi_F(\beta \tilde{F}) \cdot e_q\left( \frac{\beta \tilde{F} + \tilde{\beta} F}{F\tilde{F}} \right) \\
&= \sum_{\beta \in \F_{q^2}[T]/(F)} \chi_F(\beta) \cdot e_{q^2}\left( \frac{\beta}{F} \right),
\end{aligned}
$$
\noindent where in the second equality we have used that $\chi_F(\tilde{F}) = 1$, a consequence of cubic reciprocity. Thus, one has that $G(\chi_F) = G_{q^2}(1, F)$,
and consequently, \begin{equation}\label{omega gauss sum reln}\omega(\chi_F)^2=q^{-g-2} G_{q^2}(1, F)^2.\end{equation}
Plugging \eqref{omega gauss sum reln} into the formula \eqref{section 3.4 dual term} for the dual term, we deduce that:
\begin{equation}\label{S t dual eqn}S_{t, \rm{dual}}=q^{-g-2}\sum_{f\in \cM_{\leq 2g-t-1}}  \frac{d(f)}{\sqrt{|f|}}\left(\sum_{\substack{F \in \mathcal{H}_{q^{2}, g / 2+1} \\ P \mid F \Rightarrow P \notin \mathbb{F}_{q}[T]}}  G_{q^2}(1,F)^2\bar{\chi}_F(f)\right)\end{equation}
For \((f, F) = 1\), it follows from \eqref{gauss sum relation} that  
\begin{equation}\label{G q blah eqn 1}
    G_{q^2}(1, F)\, \bar{\chi}_F(f) = G_{q^2}(f, F).
\end{equation}
On the other hand, if $(f,F)\neq 1$, then \(\bar{\chi}_F(f) = 0.\)
Moreover, by \cite[Lemma 4.4]{DFLCubic}, if \(F \in \mathbb{F}_q[T]\) is square-free, then  
\begin{equation}\label{G q blah equation 2}
    G_{q^2}(1, F) = q^{\deg F}=q^{g/2+1}.
\end{equation}
Combining \eqref{G q blah eqn 1} and \eqref{G q blah equation 2}, we obtain  
\begin{equation}\label{G q blah equation 3}
    G_{q^2}(1, F)^2 \, \bar{\chi}_F(f) = G_{q^2}(f, F) \, q^{g/2+1}.
\end{equation}
Substitute \eqref{G q blah equation 3} into the expression \eqref{S t dual eqn} to get that 
\[ S_{t, \rm{dual}}=q^{-g/2-1}\sum_{f\in \cM_{\leq 2g-t-1}}  \frac{d(f)}{\sqrt{|f|}}\left(\sum_{\substack{F \in \mathcal{H}_{q^{2}, g / 2+1},\\  P \mid F \Rightarrow P \notin \mathbb{F}_{q}[T], \\ (F,f)=1}} G_{q^2}(f, F)\right).\]
\noindent Then \eqref{mobius identity} implies that
\begin{align*}
    S_{t, \rm{dual}} &=q^{-g/2-1}\sum_{f\in \cM_{\leq 2g-t-1}}  \frac{d(f)}{\sqrt{|f|}}\sum_{\substack{F \in \mathcal{M}_{q^{2}, g / 2+1} \\(F,f)=1}}G_{q^2}(f,F) \sum_{D\mid F}\mu(D)\\
    &=q^{-g/2-1}\sum_{f\in \cM_{\leq 2g-t-1}}  \frac{d(f)}{\sqrt{|f|}}\sum_{\substack{D \in \mathcal{M}_{q^{2}, g / 2+1} \\(D,f)=1}}\mu(D) \sum_{\substack{F \in \mathcal{M}_{q^{2}, g / 2+1-\deg(D)} \\(F,fD)=1}}G_{q^2}(f,DF) \\
    &=q^{-g/2-1}\sum_{f\in \cM_{\leq 2g-t-1}}  \frac{d(f)}{\sqrt{|f|}}\sum_{\substack{D \in \mathcal{M}_{q^{2}, g / 2+1} \\(D,f)=1}}\mu(D) G_{q^2}(f,D)\sum_{\substack{F \in \mathcal{M}_{q^{2}, g / 2+1-\deg(D)} \\(F,fD)=1}}G_{q^2}(Df,F) .
\end{align*}
\noindent From Proposition \ref{Gauss average Propn} we deduce that 
{\small\[\begin{split}
\sum_{\substack{F \in \mathcal{M}_{q^2,g/2+1-\deg(D)}\\(F,fD)=1}}   G_{q^2}(fD,F) & = \delta_{f_2=1} \frac{q^{\frac{4g}{3}+\frac{8}{3}-4\deg(D)-\frac{4}{3}\deg(f_1) - \frac{8}{3} [g/2+1+ \deg(f_1)]_3}}{\zeta_{q^2}(2)}
\overline{G_{q^2}(1,f_1D)} \nonumber\\
&\times \rho(1, [g/2+1+\deg(f_1)]_3)\prod_{\substack{P \in \mathbb{F}_{q^2}[T] \\P\mid fD}} \left (1+ \frac{1}{|P|_{q^2}} \right )^{-1}\nonumber\\
& +O \left(\delta_{f_2=1}q^{\frac{g}{3}+\varepsilon g-\deg(D)(1+2\varepsilon)-\frac{\deg(f)_1}{3}} \right) + \frac{1}{2 \pi i} \oint_{|u|=q^{-2\sigma}} \frac{ \tilde{\Psi}_{q^2}(fD,u)}{u^{g/2+1-\deg(D)}} \, \frac{du}{u},
\end{split}\]}
\noindent We write $S_{t, \text{dual}}=M_t+E_t$, where 
\begin{align*}
M_t= & \frac{q^{5g/6+5/3}}{\zeta_{q^2}(2)} \sum_{f \in \mathcal{M}_{q,\leq g-t-1}} \frac{\delta_{f_2=1} d(f)q^{- \frac{8}{3} [g/2+1+ \deg(f_1)]_3}}{q^{\deg(f)/2+\deg(f_1)/3}} \sum_{\substack{D \in \F_q[T]\\ \deg(D) \leq g/2 + 1\\(D,f)=1}} \mu(D)     q^{-4\deg(D)}
|G_{q^2}(1,D)|^2\\
&\times \rho(1, [g/2+1+\deg(f_1)]_3)\prod_{\substack {P \in \F_{q^2}[T] \\ P\mid fD}} \left (1+ \frac{1}{|P|_{q^2}} \right )^{-1} \\
=& \frac{q^{5g/6+5/3}}{\zeta_{q^2}(2)} \sum_{f \in \mathcal{M}_{q,\leq g-t-1}} \frac{\delta_{f_2=1}d(f)q^{- \frac{8}{3} [g/2+1+\deg(f_1)]_3}}{q^{\deg(f)/2+\deg(f_1)/3}} 
\rho(1, [g/2+1+ \deg(f_1)]_3)\\
&\times \prod_{\substack {P \in \F_{q^2}[T] \\P\mid f}} \left (1+ \frac{1}{|P|_{q^2}} \right )^{-1} \sum_{\substack{D \in \F_q[T]\\ \deg(D) \leq g/2 + 1\\(D,f)=1}} \mu(D)q^{-2\deg(D)}
\prod_{\substack {P \in \F_{q^2}[T] \\P\mid D}} \left (1+ \frac{1}{|P|_{q^2}} \right )^{-1}
\end{align*}
and
 \begin{equation}\label{dual error}
    \begin{split}
    E_t&=O\left(q^{-g/2-1}\sum_{f\in \cM_{\leq 2g-t-1}}  \frac{d(f)}{\sqrt{|f|}}\sum_{\substack{D \in \mathcal{M}_{q^{2}, g / 2+1} \\(D,f)=1}} G_{q^2}(f,D)\delta_{f_2=1}q^{\frac{g}{3}+\varepsilon g-\deg(D)(1+2\varepsilon)-\frac{\deg(f)_1}{3}} \right)\\&+\frac{q^{-g/2-1}}{2 \pi i}\sum_{f\in \cM_{\leq 2g-t-1}}  \frac{d(f)}{\sqrt{|f|}}\sum_{\substack{D \in \mathcal{M}_{q^{2}, g / 2+1} \\(D,f)=1}}\mu(D) G_{q^2}(f,D)  \oint_{|u|=q^{-2\sigma}} \frac{ \tilde{\Psi}_{q^2}(fD,u)}{u^{g/2+1-\deg(D)}} \, \frac{du}{u}.
    \end{split}
\end{equation}
\noindent In order to evaluate the sum over \( D \), we introduce its generating function, given by  
\[
\sum_{\substack{D \in \mathbb{F}_q[T] \\ (D,f)=1}}  \frac{ \mu(D)}{q^{2 \deg(D)}} \prod_{\substack{P \in \mathbb{F}_{q^2}[T] \\ P\mid D}} \left ( 1+ \frac{1}{|P|_{q^2}} \right )^{-1} w^{\deg(D)},
\]
which can be rewritten as the Euler product  
\[
\prod_{\substack{R \in \mathbb{F}_q[T] \\ \deg(R)  \,\mathrm{ odd} \\ R \nmid f}} \left[ 1- \frac{w^{\deg(R)}}{q^{2 \deg(R)} (1+ \frac{1}{q^{2 \deg(R)}})}\right]\prod_{\substack{R \in \mathbb{F}_q[T] \\ \deg(R)  \,\mathrm{ even} \\ R \nmid f}} \left[ 1- \frac{w^{\deg(R)}}{q^{2 \deg(R)} ( 1+ \frac{1}{q^{\deg(R)}})^2}\right].
\]
Here, we have determined the prime divisors in \(\mathbb{F}_{q^2}[T]\) by counting their underlying prime divisors in \(\mathbb{F}_q[T]\). Letting \( A_{\mathrm{dual},R}(w) \) and \( B_{\mathrm{dual},R}(w) \) denote the first and second factors in the above product, respectively, we define  
\[
\mathcal{J}(w) = \prod_{\substack{R \in \mathbb{F}_q[T] \\ \deg(R)  \,\mathrm{ odd}}} 
A_{\mathrm{dual},R}(w) \prod_{\substack{R \in \mathbb{F}_q[T] 
\\ \deg(R)  \,\mathrm{ even}}} 
B_{\mathrm{dual},R}(w).
\]
This series is absolutely convergent for \( |w|<q \). By applying Perron's formula, we obtain  
\[
\sum_{\substack{D \in \F_q[T]\\ \deg(D) \leq g/2 + 1\\(D,f)=1}} \mu(D)     q^{-2\deg(D)}  \prod_{\substack{P \in \mathbb{F}_{q^2}[T] \\ P\mid D}} \left (1+ \frac{1}{|P|_{q^2}} \right )^{-1}  
\]
\[
= \frac{1}{2 \pi i} \oint \frac{\mathcal{J}(w)}{w^{g/2+1}(1-w)} 
\prod_{\substack{R \in \mathbb{F}_q[T] \\ \deg(R)  \,\mathrm{ odd} \\ R \mid f}} A_{\mathrm{dual},R}(w)^{-1} \prod_{\substack{R \in \mathbb{F}_q[T] \\ \deg(R)  \,\mathrm{ even} \\ R\mid f}} B_{\mathrm{dual},R}(w)^{-1} \, \frac{dw}{w}.
\]

Next, we incorporate the summation over \( f \). Substituting the expression obtained above for the sum over \( D \), we find that  
\[
M_t= \frac{q^{5g/6+5/3}}{\zeta_{q^2}(2)} \sum_{f \in \mathcal{M}_{q,\leq g-t-1}} \frac{\delta_{f_2=1}d(f)\rho(1, [g/2+1+ \deg(f_1)]_3)}{q^{\frac{8}{3} [g/2+1+\deg(f_1)]_3}q^{\deg(f)/2+\deg(f_1)/3}} 
\]
\[
\times \prod_{\substack{R \in \mathbb{F}_q[T] \\ \deg(R)  \,\mathrm{ odd} \\ R \mid f}} \left(1+\frac{1}{q^{2\deg(R)}}\right)^{-1}
\prod_{\substack{R \in \mathbb{F}_q[T] \\ \deg(R)  \,\mathrm{ even} \\ R \mid f}} \left(1+\frac{1}{q^{\deg(R)}}\right)^{-2} 
\]
\[
\times \frac{1}{2 \pi i} \oint \frac{\mathcal{J}(w)}{w^{g/2+1}(1-w)}\prod_{\substack{R \in \mathbb{F}_q[T] \\ \deg(R)  \,\mathrm{ odd} \\ R \mid f}} A_{\mathrm{dual},R}(w)^{-1} \prod_{\substack{R \in \mathbb{F}_q[T] \\ \deg(R)  \,\mathrm{ even} \\ R\mid f}} B_{\mathrm{dual},R}(w)^{-1} \, \frac{dw}{w}.
\]
\noindent Let
\begin{align*}
\mathcal{H}(u,w)= &\sum_{f}
\frac{\delta_{f_2=1} d(f)}{q^{\deg(f)/2+\deg(f_1)/3}} 
\prod_{\substack{R \in \mathbb{F}_q[T] \\ \deg(R)  \,\mathrm{ odd} \\ R \mid f}} C_R(w)^{-1} \prod_{\substack{R \in \mathbb{F}_q[T] \\ \deg(R)  \,\mathrm{ even} \\ R\mid f}} D_R(w)^{-1} u^{\deg(f)},
\end{align*}
where 
\[C_R(w)=1+\frac{1}{q^{2\deg(R)}}-\frac{w^{\deg(R)}}{q^{2\deg(R)}}, \qquad D_R(w)=\left(1+\frac{1}{q^{\deg(R)}}\right)^2-\frac{w^{\deg(R)}}{q^{2\deg(R)}}.\]
\noindent Then we can write down an Euler product for $\mathcal{H}(u,w)$ and we have that
{\small \begin{align*}
 \mathcal{H}(u,w)= & \prod_{\substack{R \in \mathbb{F}_q[T] \\ \deg(R)  \,\mathrm{ odd}}} \left[1+C_R(w)^{-1} \left ( \frac{1}{q^{\deg(R)/3}} \sum_{j=0}^{\infty} \frac{ (3j+2)u^{(3j+1) \deg(R)}}{q^{(3j+1) \deg(R)/2}} + \sum_{j=1}^{\infty} \frac{(3j+1) u^{3j \deg(R)}}{q^{3j\deg(R)/2}}\right ) \right] \\
 & \times  \prod_{\substack{R \in \mathbb{F}_q[T] \\ \deg(R)  \,\mathrm{ even}}} \left[1+D_R(w)^{-1} \left ( \frac{1}{q^{\deg(R)/3}} \sum_{j=0}^{\infty} \frac{ (3j+2)u^{(3j+1) \deg(R)}}{q^{(3j+1) \deg(R)/2}} + \sum_{j=1}^{\infty} \frac{ (3j+1)u^{3j \deg(R)}}{q^{3j\deg(R)/2}}\right ) \right].
\end{align*}}
\noindent After simplifying, we have
{\small \begin{align}
\mathcal{H}(u,w) =& \prod_{\substack{R \in \mathbb{F}_q[T] \\ \deg(R)  \,\mathrm{ odd}}} \left[1+C_R(w)^{-1} \left ( \frac{|R|_q^{2/3} u^{\op{deg}(R)}\left(u^{\op{deg}(R)}+2|R|_q^{3/2}\right)}{\left(|R|_q^{3/2}-u^{3(R)}\right)^2} + \frac{ u^{3 \deg(R)}(4|R|_q^{3/2}-u^{3\deg(R)})}{ (|R|_q^{3/2}-u^{3 \deg(R)})^2}\right )\right] \nonumber \\
& \times  \prod_{\substack{R \in \mathbb{F}_q[T] \\ \deg(R)  \,\mathrm{ even}}} \left[1+D_R(w)^{-1} \left ( \frac{|R|_q^{2/3} u^{\op{deg}(R)}\left(u^{\op{deg}(R)}+2|R|_q^{3/2}\right)}{\left(|R|_q^{3/2}-u^{3(R)}\right)^2}+ \frac{ u^{3 \deg(R)}(4|R|_q^{3/2}-u^{3\deg(R)})}{ (|R|_q^{3/2}-u^{3 \deg(R)})^2}\right ) \right] \nonumber \\
=&\mathcal{Z}^2 \left ( \frac{u}{q^{5/6}}\right ) \mathcal{B}(u,w),
\end{align}}
\noindent with $\mathcal{B}(u,w)$ analytic in a wider region (for example, $\mathcal{B}(u,w)$ is absolutely convergent for $|u|<q^{\frac{11}{6}}$ and $|uw|< q^{\frac{11}{6}}$). 

The arguments in \cite{DFLCubic} applied verbatim in this context give us that:
\begin{align*}\label{twoterms}
M_1 =&\frac{q^{5g/6+5/3}}{\zeta_{q^2}(2)} \oint \oint \frac{\mathcal{H}(u,w) \mathcal{J}(w)}{w^{g/2+1}(1-w)} \left[ \frac{1}{u^{g-A-1} (1-u^3)} + \frac{ q^{1/3}}{u^{g-t-3}(1-u^3)} \right] \, \frac{dw}{w} \, \frac{du}{u},
\end{align*}
where we integrate along small circles around the origin. We first shift the contour over $w$ to $|w|=q^{1-\varepsilon}$ (since $\mathcal{J}(w)$ is absolutely convergent for $|w|<q$) and encounter the pole at $w=1$. Note that $\mathcal{H}(u,1)$ has a pole at $u=q^{-1/6}$.

\par Setting
 \begin{equation}
 \mathcal{K}(u)= \mathcal{B}(u,1) \mathcal{J}(1),
 \end{equation}we find that
\begin{align*}
M_t =&\frac{q^{5g/6+5/3}}{\zeta_{q^2}(2)} \oint \frac{ \mathcal{K}(u)}{(1-uq^{1/6})^2(1-u^3) u^{g-t-1}} (1+ q^{1/3}u^2) \, \frac{du}{u}\\
&+ \frac{q^{5g/6+5/3}}{\zeta_{q^2}(2)} \oint_{|u|=q^{-1/6-\varepsilon}} \oint_{|w|=q^{1-\varepsilon}} \frac{\mathcal{H}(u,w) \mathcal{J}(w)}{w^{g/2+1}(1-w)} \left[ \frac{1}{u^{g-t-1} (1-u^3)} + \frac{ q^{1/3}}{u^{g-t-3}(1-u^3)} \right] \, \frac{dw}{w} \, \frac{du}{u}\\
=&\frac{q^{5g/6+5/3}}{\zeta_{q^2}(2)} \oint \frac{ \mathcal{K}(u)}{(1-uq^{1/6})^2(1-u^3) u^{g-t-1}} (1+ q^{1/3}u^2) \, \frac{du}{u}+O\left(q^{\frac{g}{2}-\frac{A}{6}+\varepsilon g}\right).
\end{align*}
\noindent We compute the residue at $q^{-1/6}$:
{\small\[\begin{split}& \lim_{u\rightarrow q^{-1/6}}\left\{\frac{\partial}{\partial u}\left(\frac{(u-q^{-1/6})^2 \mathcal{K}(u)}{(1-uq^{1/6})^2(1-u^3) u^{g-t}} (1+ q^{1/3}u^2)\right)\right\}\\ 
=& \frac{1}{q^{1/3}} \lim_{u\rightarrow q^{-1/6}}\left\{\frac{\partial}{\partial u}\left(\frac{\mathcal{K}(u)(1+q^{1/3}u^2)}{u^{g-t}(1-u^3)}\right)\right\} \\
= & \frac{1}{q^{1/3}} \lim_{u\rightarrow q^{-1/6}} \left\{ \frac{\mathcal{K}'(u)(1+q^{1/3} u^2)}{u^{g-t}(1-u^3)}+\frac{2\mathcal{K}(u) q^{1/3} u}{u^{g-t}(1-u^3)}-\frac{\mathcal{K}(u)u^{g-A-1}(1+q^{1/3}u^2)((g-t)-(g-t+3)u^3)}{(u^{g-t}-u^{g-t+3})^2}\right\}\\
=& q^{-1/3}\left\{ \frac{\mathcal{K}'(q^{-1/6}) q^{g/6-t/6+1/2}}{(\sqrt{q}-1)}+\frac{2\mathcal{K}(q^{-1/6})q^{g/6-t/6+2/3}}{(\sqrt{q}-1)}-\frac{2 q^{g/6-t/6+2/3}\mathcal{K}(q^{-1/6})\left((g-t)\sqrt{q}-(g-t+3)\right)}{(\sqrt{q}-1)^2}\right\}.
\end{split}\]}

\noindent Shifting the contour of integration to $|u|=q^{-\varepsilon}$, we pick up a residue at $u=q^{1/6}$, thus obtaining
{\small\[
\begin{split}
M_{t}= & -q^{-1/3}\left\{ \frac{\mathcal{K}'(q^{-1/6}) q^{g/6-t/6+1/2}}{(\sqrt{q}-1)}-\frac{2 q^{g/6-t/6+2/3}\mathcal{K}(q^{-1/6})\left((g-t)\sqrt{q}-(g-t+3)\right)}{(\sqrt{q}-1)^2}\right.\\& \left.+\frac{2\mathcal{K}(q^{-1/6})q^{g/6-t/6+2/3}}{(\sqrt{q}-1)}\right\}+O\left(q^{\frac{g}{2}-\frac{t}{6}+\varepsilon g}\right) 
+\frac{q^{5 g / 6+5 / 3}}{\zeta_{q^{2}}(2)} \oint_{|u|=q^{-\varepsilon}} \frac{\mathcal{K}(u)\left(1+q^{1 / 3} u^{2}\right)}{\left(1-u q^{1 / 6}\right)^2\left(1-u^{3}\right) u^{g-t-1}} \frac{d u}{u} \\
= & -q^{-1/3}\left\{ \frac{\mathcal{K}'(q^{-1/6}) q^{g/6-t/6+1/2}}{(\sqrt{q}-1)}-\frac{2 q^{g/6-t/6+2/3}\mathcal{K}(q^{-1/6})\left((g-t)\sqrt{q}-(g-t+3)\right)}{(\sqrt{q}-1)^2}\right.\\
&\left.+\frac{2\mathcal{K}(q^{-1/6})q^{g/6-t/6+2/3}}{(\sqrt{q}-1)}\right\} +O\left(q^{\frac{5 g}{6}+\varepsilon g}\right).
\end{split}
\]}
Consider the error term $E_1$ from equation \eqref{dual error}. The first term coming from the first error in equation \eqref{dual error} will be bounded by
\begin{align*} 
\ll &q^{-\frac{g}{2}}\sum_{f \in \mathcal{M}_{q,\leq g-t-1}}\frac{d(f)}{q^{\deg(f)/2}} \sum_{\deg(D)\leq g/2+1} q^{\deg(D)} q^{\frac{g}{3}+\varepsilon g-\deg(D)(1+2\varepsilon)-\frac{\deg(f)_1}{3}}
\ll q^{\left(\frac{1}{2}+2\varepsilon\right)g -\frac{t}{6}}
\end{align*}
\noindent Recall that $2/3<\sigma<4/3$, therefore here we choose $\sigma=4/3-\varepsilon$ and deduce that
{\small\begin{align*}
E_t \ll &q^{-g/2-1} \frac{1}{2 \pi i} \oint_{|u|=q^{-2\sigma}} \sum_{f \in \mathcal{M}_{q,\leq g-t-1}} \frac{d(f)}{q^{\deg(f)/2}} \sum_{\substack{D \in \mathbb{F}_q[T] \\ \deg(D) \leq g/2+1 \\ (D,f)=1}} \mu(D) G_{q^2}(f,D) \frac{\tilde{\Psi}_{q^2}(fD,u)}{u^{g/2+1-\deg(D)}} \, \frac{du}{u} +q^{\frac{g}{2} - \frac{t}{6} + \varepsilon g}\\
\ll& q^{-\frac{g}{2}}\sum_{f \in \mathcal{M}_{q,\leq g-t-1}}\frac{d(f)}{q^{\deg(f)/2}} \sum_{\deg(D)\leq g/2+1} q^{\deg(D)} q^{\sigma g-3 \sigma \deg(D)+\frac{3}{2}\deg(D)+\deg(f) ( \tfrac{3}{2}-\sigma)}+q^{\frac{g}{2} - \frac{t}{6} + \varepsilon g}\\
\ll& g^2 q^{\frac{3}{4}g -\frac{2t}{3}+\varepsilon (g+t)}+q^{\frac{g}{2} - \frac{t}{6} + \varepsilon g}.
\end{align*}} Combining the expressions for $M_t$ and $E_t$ it follows that
{\small\begin{equation}\label{dual final}
\begin{split}
    S_{t,\mathrm{dual}} =& -q^{-1/3}\left\{ \frac{\mathcal{K}'(q^{-1/6}) q^{g/6-t/6+1/2}}{(\sqrt{q}-1)}-\frac{2 q^{g/6-t/6+2/3}\mathcal{K}(q^{-1/6})\left((g-t)\sqrt{q}-(g-t+3)\right)}{(\sqrt{q}-1)^2}\right.\\
&\left.+\frac{2\mathcal{K}(q^{-1/6})q^{g/6-t/6+2/3}}{(\sqrt{q}-1)}\right\} +O\left(q^{\frac{5 g}{6}+\varepsilon g}+g^2 q^{\frac{3}{4}g -\frac{2t}{3}+\varepsilon (g+t)}+q^{(g+A)/2}\right).
\end{split}
\end{equation}}

\subsection{Proof of the main result}
\par We now proceed to prove the main result, combining the estimates for the principal terms from \eqref{princ final}, derived in sections \ref{main tem section} and \ref{noncube section}, with the bounds for the dual terms given in \eqref{dual final} from section \ref{dual term section}.

\begin{proof}[Proof of Theorem \ref{main theorem}]
    \par Recall that from \eqref{princ+dual},
\begin{equation*}
    \sum_{\substack{F \in \mathcal{H}_{q^{2}, g / 2+1} \\ P \mid F \Rightarrow P \notin \mathbb{F}_{q}[T]}} L_{q}\left(1 / 2, \chi_{F}\right)^2 
  = \left(1-\frac{1}{\sqrt{q}}\right)^2\left(\sum_{i=0}^{A}(i+1)S_{A-i, \rm{prin}}+\sum_{i=0}^{2g-A-1}(i+1)S_{2g-A-1+i, \rm{dual}}\right). 
\end{equation*}
\noindent Substituting the estimate $ S_{t,\mathrm{prin}}$  from \eqref{princ final} in the first sum gives
{\small\begin{align*}
    \sum_{i=0}^{A}(i+1)S_{A-i, \rm{prin}}&= \frac{q^{g+2}}{\zeta_{q}(3)}\zeta_{q}(3/2)^2A_q(1/q^2,1/q^{3/2})\sum_{i=0}^{A}(i+1)+ \frac{q^{g-A/6-2}}{\zeta_{q}(3)}(P_2(q)+Q_3(q^{3/2}))\sum_{i=0}^{A}(i+1){q^{i/6}}\\&+O\left(q^{g-A/2+\varepsilon A}\sum_{i=0}^{A}(i+1)q^{i/2-\varepsilon i}+q^{\frac{g+A}{2}+\varepsilon g+A\varepsilon}\sum_{i=0}^{A}(i+1)q^{-i/2}\right)\\
    &= \frac{q^{g+2}}{\zeta_{q}(3)}\zeta_{q}(3/2)^2A_q(1/q^2,1/q^{3/2})\frac{A(A+1)}{2}+\frac{q^{g-A/6-2}}{\zeta_{q}(3)}(P_2(q)+Q_3(q^{3/2}))\\&\times \frac{q^{1/6}(q^{(A+1)/6}-1)(q^{1/6}-2)+((A+2)q^{(A+1)/6}-1)(q^{1/6}-1)}{(q^{1/6}-1)^2}+O\left(q^{g}+q^{(g+A)/2+\varepsilon g}\right).
\end{align*}}
\noindent From \eqref{dual final}, the sum of dual terms is 
{\small \begin{align*}
   & \sum_{i=0}^{2g-A-1}(i+1)S_{2g-A-1+i, \rm{dual}}\\&=-q^{(2g-A-3)/6}\left\{ \frac{\mathcal{K}'(q^{-1/6}) q^{g/2-A/2+1/3}}{(\sqrt{q}-1)}+\frac{2\mathcal{K}(q^{-1/6})q^{g/2-A/2+1/2}}{(\sqrt{q}-1)}\right\}\sum_{i=0}^{2g-A-1}(i+1)q^{-i/6}\\
&+q^{(2g-A-3)/6}\left\{\frac{2 q^{g/2-A/2+1/2}\mathcal{K}(q^{-1/6})}{(\sqrt{q}-1)^2} \right\}\sum_{i=0}^{2g-A-1}(i+1)\left((A-g+1-i)\sqrt{q}-(A-g+4-i)\right)q^{-i/6}\\&+O\left(q^{\frac{5 g}{6}+\varepsilon g}+g^2 q^{\frac{3}{4}g -\frac{2}{3}(2g-1-A)+\varepsilon g}\sum_{i=0}^{2g-A-1}(i+1)q^{\frac{2}{3}i}\right)\\
&=O\left(q^{g/2-A/2+1/3}+q^{5 g/6+\varepsilon g}\right).
\end{align*}}
We set $A:=g/2$, this gives the required result.
\end{proof}

\bibliographystyle{alpha}
\bibliography{references}
\end{document}